\newtheorem{theorem}{Theorem}[section]
\newtheorem*{theorem*}{Theorem}
\newtheorem*{acknowledgement*}{Acknowledgement}
\newtheorem{corollary}[theorem]{Corollary}
\newtheorem{lemma}[theorem]{Lemma}
\newtheorem{proposition}[theorem]{Proposition}
\theoremstyle{definition}
\newtheorem{definition}[theorem]{Definition}
\theoremstyle{remark}
\newtheorem{remark}[theorem]{Remark}
\numberwithin{equation}{section}
\renewcommand{\epsilon}{\varepsilon}
\newcommand{\circo}{\accentset{\circ}}
\DeclareMathOperator{\tr}{tr}
\DeclareMathOperator{\supp}{supp}
\author{Stephen Lynch}
\address{Eberhard Karls Universit\"{a}t T\"{u}bingen \\ Auf der Morgenstelle 10\\ 72076 T\"{u}bingen\\ Germany}
\email{stephen.lynch@math.uni-tuebingen.de}
\author{Huy The Nguyen}
\address{Queen Mary University of London\\ Mile End Road\\ London E1 4NS\\ United Kingdom}
\email{h.nguyen@qmul.ac.uk}
\begin{document} 

\title{Convexity Estimates for High Codimension Mean Curvature Flow}
\maketitle

\begin{abstract}
We consider the evolution by mean curvature of smooth $n$-dimensional submanifolds in $\mathbb{R}^{n+k}$ which are compact and quadratically pinched. We will be primarily interested in flows of high codimension, the case $k\geq 2$. We prove that our submanifold is asymptotically convex, that is the first eigenvalue of the second fundamental form in the principal mean curvature direction blows up at a strictly slower rate than the mean curvature vector. We use this convexity estimate to show that at a singular time of the flow, there exists a rescaling that converges to a smooth codimension-one limiting flow which is convex and moves by translation. 
\end{abstract}

\section{Introduction}

Let us consider a compact smooth $n$-manifold $M$, and a family of immersions
\[F : M\times [0,T) \to \mathbb{R}^{n+k}\]
which move by mean curvature flow, that is
\[\partial_t F(x,t) = H(x,t)\]
for each $(x,t) \in M\times [0,T)$ where $H$ is the mean curvature vector. The mean curvature flow constitutes a system of quasilinear weakly parabolic partial differential equations for $F$, and since $M$ is compact the flow must form a singularity in finite time. Singularity formation may be characterised analytically as follows: if we let $A$ denote the second fundamental form and take $T$ to be the maximal time then there holds
\[\limsup_{t\nearrow T} \sup_M |A|(\cdot,t) = \infty.\]
A profound and challenging problem is characterising and classifying the geometry of singularities, whose formation depends on the initial submanifold $M_0$ where $M_t:=F(M,t)$. In the seminal work of Huisken-Sinestrari \cite{Huisk-Sin99a}, the singularities formed by mean convex codimension one solutions were shown to be weakly convex (White obtained a similar result for embedded mean convex solutions in \cite{White}). It is natural to seek a corresponding theorem for solutions of higher codimension. However, we encounter a number of new difficulties, the foremost being that the second fundamental form and the mean curvature are vector-valued and consequently there is no direct corresponding notion of mean convexity. We instead use a different but related condition introduced by Andrews-Baker\cite{Andrews2010}. They showed that when $n \geq 2$, the quadratic pinching condition
\begin{equation}\label{eqn_quadratic} 
|A|^2 - c |H|^2 + a\leq 0
\end{equation}
is preserved for each $c < \frac{4}{3n}$ and $a > 0$. That is, if the condition is satisfied at the initial time, then it is satisfied by $M_t$ for every $t\in[0,T)$. We note that for compact hypersurfaces, positive mean curvature implies this condition but for all sufficiently large $c$. We will refer to submanifolds satisfying \eqref{eqn_quadratic} as being quadratically pinched. 

Andrews-Baker showed that if $c < \{\frac{4}{3n}, \frac{1}{n-1}\}$ the flow contracts quadratically pinched solutions to round points. This result is a high codimension generalisation of Huisken's work on convex solutions of codimension one \cite{Huisken84}. Recently the second-named author has constructed a flow with surgeries for solutions of dimension $n \geq 5$ which are quadratically pinched with 
\[c< c_n : =\begin{cases} 
      \frac{3(n+1)}{2n(n+2)} & n = 5, 6, 7, \\
      \frac{4}{3n} & n \geq 8.
   \end{cases}
\]
This generalises the surgery construction for two-convex hypersurface flows due to Huisken-Sinestrari \cite{Huisk-Sin09}. An important ingredient in \cite{Nguyen20} (and in the present work) is the codimension estimate due to Naff, which implies the singularities formed by a quadratically pinched solution are codimension one if $c < c_n$.

In this paper we show that a quadratically pinched mean curvature flow with $c < c_n$ is asymptotically convex in a quantifiable manner. As the second fundamental form is vector-valued, we denote by $\lambda_1 \leq \dots \leq \lambda_n$ the eigenvalues of the second fundamental form in the principal normal direction, that is $\nu_1= \frac{H}{|H|}$ (see Section 2 for precise definitions).   
\begin{theorem}
\label{thm:convex} 
Let $F:M\times[0,T) \to \mathbb{R}^{n+k}$ be a mean curvature flow of dimension $n \geq 5$ which is quadratically pinched with $c < c_n$. Then for any $\varepsilon >0$ there exists a constant $C_\varepsilon>0$ which depends only on $n$, $\varepsilon$ and $M_0$ such that 
\begin{align*}
\lambda_1 \geq -\varepsilon |H| - C_\varepsilon
\end{align*}
on $M_t$ for each $t \in [0,T)$. 
\end{theorem}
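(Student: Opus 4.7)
The plan is to adapt the Stampacchia iteration strategy of Huisken--Sinestrari \cite{Huisk-Sin99a} to the high codimension setting by systematically exploiting Naff's codimension estimate, which has already been invoked to ensure that blow-ups are codimension one. Specifically, I would aim to show that for some small $\sigma>0$ there is a constant $C$ with
\[
f_\sigma := \frac{(-\lambda_1)_+}{|H|^{1-\sigma}} \leq C
\]
on $M\times[0,T)$; Young's inequality then converts $-\lambda_1 \leq C|H|^{1-\sigma}$ into the desired $\lambda_1 \geq -\varepsilon|H|-C_\varepsilon$. To make $\lambda_1$ amenable to a PDE argument, I would follow Hamilton's device: at a first point where a new supremum of $f_\sigma$ is attained, one may assume the principal normal $\nu_1=H/|H|$ is smooth and that $\lambda_1$ is a simple eigenvalue of the Weingarten operator in the $\nu_1$ direction, reducing the computation to a scalar one. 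The resulting evolution inequality should have the schematic form
\[
\partial_t f_\sigma \leq \Delta f_\sigma + \frac{2(1-\sigma)}{|H|^2}\langle \nabla f_\sigma,\nabla|H|^2\rangle + \sigma |A|^2 f_\sigma + R,
\]
where $R$ collects reaction terms involving the other eigenvalues in direction $\nu_1$ and the normal components in directions $\nu_\alpha$, $\alpha\geq 2$.

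The core of the argument is controlling $R$ on super-level sets $\{f_\sigma>k\}$. The pinching $|A|^2\leq c|H|^2+a$ with $c<c_n$ combined with the codimension estimate yields, at points where $|H|$ is large, that the components of the second fundamental form transverse to $\nu_1$ are negligible compared to $|H|^2$. Consequently the Simons-type reaction is essentially that of a codimension-one flow, and one can spare a definite fraction of the pinching deficit $c_n |H|^2 - |A|^2$ to absorb $R$, provided $\sigma$ is chosen small relative to $c_n-c$. Integration by parts against the test function $(f_\sigma-k)_+^{p-1}$ then converts the coefficient $\sigma|A|^2 f_\sigma$ into a positive gradient term that drives the iteration, and the Michael--Simon Sobolev inequality yields a decay relation between $\int_{M_t}(f_\sigma-k)_+^p\,d\mu$ and $|\{f_\sigma>k\}|$, to which the standard Stampacchia lemma applies. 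Starting the iteration requires an a priori $L^p$ bound on $f_\sigma$ for some $p>n$, which I would obtain via a separate Moser-type estimate on the evolution of $\int f_\sigma^p\,d\mu$, again absorbing the non-principal curvature terms using Naff's inequality.

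I expect the main obstacle to be a delicate calibration of constants: $\sigma$ must be small enough to render the bad reaction terms dominated by the good gradient term, yet the margin in the pinching and codimension estimates used to absorb $R$ must remain uniform in $\sigma$ and independent of the approach to the singular time. A secondary technical point is executing the eigenvalue calculus rigorously when $\lambda_1$ has higher multiplicity or when the principal normal frame is not globally defined, which I would handle via Lipschitz reduction on approximating smooth quantities, exactly as in \cite{Huisk-Sin99a}, but using the codimension estimate to guarantee that the perturbation from the transverse normal directions is of lower order at each step.
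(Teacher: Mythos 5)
Your proposal correctly identifies the main architecture — a weighted quantity $f_\sigma \sim (-\lambda_1)/|H|^{1-\sigma}$, Stampacchia iteration, Michael--Simon Sobolev, and Naff's codimension estimate to reduce the transverse normal directions to lower order — and this is indeed the skeleton of the paper's argument. However there are two genuine gaps which would prevent the iteration from closing as you have sketched it.

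First, and most seriously, you never confront the bad reaction term $\sigma|A|^2 f_\sigma$, which you mistakenly describe as being "converted into a positive gradient term" by integrating against $(f_\sigma-k)_+^{p-1}$. In reality that term has the wrong sign and survives the integration by parts; the whole difficulty (here, as in Huisken's 1984 paper and in Huisken--Sinestrari) is to absorb the resulting $\sigma \int |h|^2 f_{\sigma,k}^p$ contribution. This is done not by the pinching deficit but by a Poincar\'{e}-type inequality derived from Simons' identity: one integrates the squared symmetrised Simons identity against $|A|^{-4}u^2$, which after integration by parts bounds $\int |h|^2 u^2$ by gradient integrals plus a lower-order codimension term. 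Your proposal contains no analogue of this, and without it the $L^p$-estimate fails for every $\sigma>0$. Moreover, to make the Simons reaction term coercive one must know that $\lambda_1$ is \emph{quantifiably} negative on the support of the test function — this is precisely why the paper builds the modifier $-\varepsilon w$ (with $w\sim|H|$) into the definition of $f$, so that $f>0$ forces $\lambda_1 \lesssim -\varepsilon|H|$, and this lower bound enters the Poincar\'{e} inequality through the factor $\mu_1^2(\mu_n-\mu_1)^2$. Your bare $(-\lambda_1)_+$ has no such coercivity, and the Poincar\'{e} inequality would degenerate on the set where $\lambda_1$ is only slightly negative. The paper also adds $+\Lambda v$ with $v=|\hat A|^2/|H|$ to generate a good term $-2\Lambda|\nabla\hat A|^2/|H|$ which absorbs the new gradient terms involving $\nabla\hat A$ that appear in the evolution of $h_{11}$ and have no counterpart in codimension one; your $R$-absorption discussion is too vague to cover these.

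Second, the "Hamilton device" (working at a first time a new supremum is attained, where one may assume simplicity of $\lambda_1$) is the right tool for a pointwise maximum-principle argument but cannot be used for Stampacchia iteration, which requires a differential inequality for $\lambda_1$ that is valid in an integral sense on all of $M_t$, not just at a spacetime extremum. The paper handles this via semiconcavity of $\lambda_1$ and Alexandrov's theorem: $\lambda_1$ is twice differentiable a.e., the evolution inequality from Lemma~\ref{lem:lambda_1_evol_1} holds at every such point, and integration by parts against nonnegative Lipschitz test functions is justified by Lemma~\ref{lem:alex}. Replacing your "first-supremum" reduction by this distributional treatment is not optional — it is what makes the multiplication by $(f_\sigma - k)_+^{p-1}$ and the subsequent integration legitimate.
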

As $\varepsilon>0$ is arbitrary, this shows that the negative part of the first eigenvalue in the principal normal direction does not grow as fast as $|H|$. We then use this estimate to characterise type II singularities of the quadratically pinched mean curvature flow near the maximum of the curvature:
\begin{theorem}
\label{thm:sing}
Suppose a type II singularity forms at time $T$, that is 
\[\limsup_{t \nearrow T} \Big[(T-t)\max_{M_t} |A|^2 \Big] =\infty.\]
Then there exists a sequence of rescalings of $F$ that subconverges smoothly to a codimension one limiting flow which is either: a strictly convex translating solution; or the isometric product of $\mathbb{R}^m$ with a strictly convex translating solution of dimension $n -m$. 
\end{theorem}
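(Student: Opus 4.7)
The plan is to execute Hamilton's type~II blow-up procedure and identify the limit using Theorem~\ref{thm:convex} together with Naff's codimension estimate. Concretely, I would fix an increasing sequence $T_j \nearrow T$ and for each $j$ choose a spacetime point $(x_j, t_j) \in M \times [0, T_j]$ that almost maximises $(T_j - t)|A|^2(x,t)$ (or equivalently $(T_j - t)|H|^2$, using the pinching); setting $\lambda_j = |A|(x_j, t_j)$, the type~II hypothesis forces $\lambda_j^2(T_j - t_j) \to \infty$, so the parabolic rescalings
\[ F_j(x, \tau) := \lambda_j \bigl(F(x,\, t_j + \lambda_j^{-2}\tau) - F(x_j, t_j)\bigr) \]
live on time intervals $[-\alpha_j, \omega_j]$ with $\alpha_j, \omega_j \to \infty$, attain $|A_j|(0,0) = 1$, and satisfy $\sup |A_j|^2 \leq 1 + o(1)$. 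Combining the Shi-type derivative estimates implied by this uniform bound with a pointed Cheeger--Gromov compactness theorem for mean curvature flow (in the spirit of Langer/Breuning) then produces a subsequential smooth pointed limit: a complete, eternal mean curvature flow $\tilde F : \tilde M \times \mathbb{R} \to \mathbb{R}^{n+k}$ with $|\tilde A| \leq 1$ and $|\tilde A|$ attaining the value $1$ at the basepoint.

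Next I would read off the geometry of the limit from the earlier estimates. The pinching rescales to $|A_j|^2 - c|H_j|^2 + a\lambda_j^{-2} \leq 0$, so in the limit $|\tilde A|^2 \leq c|\tilde H|^2$; in particular $|\tilde H|$ is bounded below at the basepoint by $c^{-1/2}$, so $\tilde H$ does not vanish identically. Naff's codimension estimate, which is scale invariant up to an additive constant killed by the rescaling, then forces the second fundamental form of the limit to take values only in the principal-normal line, so $\tilde F$ is effectively a hypersurface flow inside an affine $(n+1)$-plane of $\mathbb{R}^{n+k}$. Applying Theorem~\ref{thm:convex} to the rescaled flows yields $\tilde \lambda_1 \geq -\epsilon |\tilde H|$ for every $\epsilon > 0$, and hence $\tilde \lambda_1 \geq 0$; the limiting hypersurface flow is therefore weakly convex.

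At this point the conclusion follows from standard codimension-one theory: arranging the blow-up so that $|\tilde H|$ attains its spacetime maximum at $(\tilde x_\infty, 0)$, Hamilton's differential Harnack inequality for weakly convex mean curvature flow and its equality case force $\tilde F$ to be a translating soliton, while the strong maximum principle applied to the evolution of $\tilde A$ yields the desired dichotomy: either $\tilde A > 0$ and $\tilde F$ is a strictly convex translator, or the kernel of $\tilde A$ is a parallel distribution of some dimension $m$ and $\tilde F$ splits isometrically as $\mathbb{R}^m \times \Sigma^{n-m}$ with $\Sigma$ a strictly convex translator. The main technical obstacle is upgrading the $C^0$ curvature bound for the rescalings to smooth pointed convergence of immersions in high codimension, where a priori one must rule out degeneracies such as collapse in a normal direction; here Naff's codimension estimate is the crucial input, as it guarantees that the limiting immersion is genuinely $(n+1)$-dimensional in the ambient space, allowing one to invoke the well-developed compactness theory for hypersurface mean curvature flow.
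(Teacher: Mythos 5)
Your proposal follows essentially the same route as the paper: Hamilton's type~II blow-up about an almost-maximal spacetime sequence, curvature and derivative bounds giving smooth subconvergence to an eternal solution, passage of the quadratic pinching, Naff's codimension estimate, and Theorem~\ref{thm:convex} to the limit, followed by Hamilton's strong maximum principle for the tensor $\tilde A \geq 0$ and the rigidity case of the differential Harnack inequality. The logic is sound, so I will limit myself to two clarifications against what the paper actually does.

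First, you misattribute the role of Naff's estimate in the compactness step. The passage from a uniform bound on $|A_j|$ to bounds on all higher derivatives and then to pointed smooth subconvergence works in any codimension (this is the Ecker--Huisken--type interior estimates adapted to high codimension as in Baker's thesis, together with Hamilton's compactness theorem); it does not require first knowing the limit is effectively a hypersurface. Naff's estimate enters only afterwards, to identify the limit: since $|\hat A_j|^2/|H_j|^{2-2\eta} \leq C_0 L_j^{-2\eta}$ and $|H_j|$ is uniformly bounded, $|\hat A_j| \to 0$ uniformly, so $\hat A \equiv 0$ on the limit and the image of $\tilde F$ lies in an affine $(n+1)$-plane. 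There is no separate ``degeneracy in a normal direction'' to rule out at the compactness stage.

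Second, the paper takes care to show $|\tilde H| > 0$ everywhere on the limit, by applying the strong maximum principle to $(\partial_t - \Delta)\tilde W \geq 0$ where $\tilde W = (\tfrac{4}{3n} - \tfrac{\varepsilon_0}{2})|\tilde H|^2 - |\tilde A|^2$, so that the pinching then forces $|\tilde H|^2 > 0$. You only observe that $\tilde H$ is nonzero at the basepoint. This is a real omission in your write-up, though not a fatal one: the uniform decay $|\hat A_j| \to 0$ already gives $\hat A \equiv 0$ without it, and the rescaled convexity estimate $\tilde\lambda_1 \geq -\epsilon |\tilde H|$ also passes to the limit regardless of whether $\tilde H$ vanishes somewhere. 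Still, you should verify $|\tilde H| > 0$ (or at least explain why it is not needed) before writing down quantities like $\tilde h = \langle\tilde A, \tilde H\rangle/|\tilde H|$ on the limit. Finally, note that the constant $C_\varepsilon$ in Theorem~\ref{thm:convex} depends on $T$; the paper remarks that $T$ is controlled by $M_0$ via the maximum principle applied to the evolution of $W$, which is needed to make the constant in the rescaled estimate vanish as $L_j \to \infty$.
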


The paper is set out as follows. In Section \ref{sec:evolution_equations} we gather together the necessary evolution equations and technical tools. In particular, $\lambda_1$ is not smooth but is locally Lipschitz and semiconvex, and its evolution equation must be understood in a distributional sense. In Section \ref{sec:Poinc} we obtain a Poincar\'e-type inequality which requires the Simons' identity for high codimension submanifolds. In Section \ref{sec:Stamp}, we complete the proof of the convexity estimate by applying Huisken's Stampacchia iteration. Finally, in Section \ref{sec:singularity_formation} we study singularity formation and prove Theorem \ref{thm:sing}.

\textbf{Acknowledgements.}  
The second named author was supported by the EPSRC grant EP/S012907/1.

\section{Evolution equations}

\label{sec:evolution_equations}

Let $F:M\times [0,T) \to \mathbb{R}^{n+k}$ solve mean curvature flow and write $M_t := F(M,t)$. We recall from the work of Andrews-Baker \cite{Andrews2010} the following evolution equations for the second fundamental form and mean curvature vector. With respect to local orthonormal frames $\{e_i\}$ and $\{\nu_\alpha\}$ for the tangent and normal bundles, 
\begin{align*}
\nabla_{\partial_t} A_{ij\alpha} & = \Delta A_{ij\alpha} + A_{ij\beta}A_{pq\beta} A_{pq\alpha} \\
&+A_{iq\beta}A_{qp\beta} A_{pj\alpha}+A_{jq\beta}A_{qp\beta} A_{pi\alpha} - 2 A_{ip\beta}A_{jq\beta}A_{pq\alpha} ,
\end{align*}
and 
\begin{align*}
\nabla_{\partial_t} H_{\alpha} & = \Delta H_{\alpha} + H_\beta A_{pq\beta} A_{pq \alpha}.
\end{align*}
From these equations we can compute that 
\begin{align*} 
(\partial_t -\Delta ) |A|^2 &= - 2 |\nabla A|^2 + 2 | \langle A, A\rangle |^2 + 2 |R^{\perp}|^2 \\
(\partial_t - \Delta) |H|^2 & = - 2 |\nabla H|^2 + 2 |\langle A, H\rangle |^2 .
\end{align*}
We use $R^{\perp}$ to denote the normal curvature, which is given by 
\begin{align*}
 R^\perp_{ij \alpha \beta} = A_{ip\alpha}  A_{jp\beta}- A_{jp \alpha} A_{ip\beta}.
\end{align*}

Under the quadratic pinching assumption we have $|H|>0$, so at any point in $M_t$ we can choose a local orthonormal frame for the normal bundle which is such that 
\[\nu_1 = \frac{H}{|H|}.\]
We also use the notation 
\[\hat A = A - \frac{1}{|H|} \langle A, H\rangle \frac{H}{|H|} = A - A_1 \nu_1\]
to denote the components of the second fundamental form orthogonal to the mean curvature vector, and write
\[h = \frac{1}{|H|}\langle A, H\rangle = A_1\]
for the scalar part of the mean curvature component of $A$. Hence $A$ admits the decomposition
\[A = h \nu_1 + \hat A.\]

\subsection{Pinching is preserved} With this notation in place we can state the estimate proven by Andrews-Baker showing that quadratic pinching is preserved by the flow. 
\begin{lemma}[\cite{Andrews2010}, Section 3]
\label{lem:pinch_pres}
Fix constants $0 <c < \frac{4}{3n}$ and $a >0$ and let \[\mathcal Q := |A|^2 - c|H|^2 + a.\] At every point in $M\times[0,T)$ where $\mathcal Q \leq 0$ there holds 
\begin{align}
\label{eqn_pinchpres2}
\nonumber(\partial_t-\Delta )  \mathcal Q & \leq -2 ( |\nabla A |^2-c |\nabla H|^2 )+2 |h|^2  \mathcal Q-2 a |h|^2-\frac{2a}{n} \frac{1}{c-\nicefrac{1}{n}}|\hat A|^2 \\ 
 &+\frac{2}{n}\frac{1}{c-\nicefrac{1}{n}}| \hat A|^2 \mathcal Q+ \left(6-\frac{2}{n (c-\nicefrac{1}{n})}  \right) |\circo h|^2 | \hat A|^2+\left(3-\frac{2}{n (c-\nicefrac{1}{n})}  \right)|\hat A|^4. 
\end{align}
\end{lemma}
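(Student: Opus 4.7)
The strategy is to combine the stated evolution equations for $|A|^2$ and $|H|^2$ and estimate the resulting reaction terms using the pinching hypothesis $\mathcal Q \leq 0$. Subtracting the two evolution formulas yields
\[
(\partial_t - \Delta)\mathcal Q = -2\bigl(|\nabla A|^2 - c|\nabla H|^2\bigr) + 2|\langle A, A\rangle|^2 + 2|R^\perp|^2 - 2c|\langle A, H\rangle|^2,
\]
so the gradient term on the right of the target inequality is already produced, and the entire task reduces to a pointwise algebraic estimate of the reaction terms at a point where $\mathcal Q \leq 0$.

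I would work in an adapted normal frame with $\nu_1 = H/|H|$ and decompose $A = h\nu_1 + \hat A$, so that $H_\beta = 0$ and $\tr \hat A_\beta = 0$ for $\beta \geq 2$, and $|\langle A, H\rangle|^2 = |H|^2|h|^2$. The trace-freeness of each $\hat A_\beta$ gives the key identity $\langle h, \hat A_\beta\rangle = \langle \circo h, \hat A_\beta\rangle$, which will be essential when bounding the mixed blocks of $|\langle A, A\rangle|^2$ and $|R^\perp|^2$. The pinching hypothesis is then applied twice. The first application uses $c|H|^2 = |h|^2 + |\hat A|^2 + a - \mathcal Q$ to compute
\[
-2c|\langle A, H\rangle|^2 = -2|h|^4 - 2|h|^2|\hat A|^2 - 2a|h|^2 + 2|h|^2\mathcal Q,
\]
which accounts for the $2|h|^2\mathcal Q$ and $-2a|h|^2$ terms of the target. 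The second application uses $|h|^2 = |\circo h|^2 + |H|^2/n$ together with the equivalent pinching form $(c - 1/n)|H|^2 = |\circo h|^2 + |\hat A|^2 + a - \mathcal Q$ (obtained by subtracting $|H|^2/n$ from both sides of the first) to rewrite
\[
-2|h|^2|\hat A|^2 = -2|\circo h|^2|\hat A|^2 - \tfrac{2}{n(c - 1/n)}|\hat A|^2\bigl(|\circo h|^2 + |\hat A|^2 + a - \mathcal Q\bigr),
\]
which manufactures the remaining terms carrying the $c - 1/n$ denominator on the right-hand side of the target, namely the $-\tfrac{2a}{n(c-1/n)}|\hat A|^2$, the $+\tfrac{2}{n(c-1/n)}|\hat A|^2\mathcal Q$, and the $-\tfrac{2}{n(c-1/n)}$ corrections to the $|\circo h|^2|\hat A|^2$ and $|\hat A|^4$ coefficients.

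Collecting these substitutions, matching the target inequality reduces to the purely pointwise algebraic bound
\[
|\langle A, A\rangle|^2 + |R^\perp|^2 \leq |h|^4 + 4|\circo h|^2|\hat A|^2 + \tfrac{3}{2}|\hat A|^4.
\]
Expanding in the adapted frame, the $(\nu_1, \nu_1)$-block of $|\langle A, A\rangle|^2$ contributes exactly $|h|^4$. The mixed $(\nu_1, \nu_\beta)$-blocks of $|\langle A, A\rangle|^2$ and $|R^\perp|^2$ for $\beta \geq 2$ take the form $\langle \circo h, \hat A_\beta\rangle^2$ and $|[\circo h, \hat A_\beta]|^2$, respectively; these are bounded via Cauchy--Schwarz and an appropriate commutator inequality for symmetric matrices, yielding the $4|\circo h|^2|\hat A|^2$ contribution. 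The pure $\hat A$-blocks reduce to the $\tfrac{3}{2}|\hat A|^4$ bound via the sharp algebraic inequality of \cite{Andrews2010} applied directly to the normal-vector-valued trace-free symmetric tensor $\hat A$. The main obstacle is extracting the sharp constants $4$ and $\tfrac{3}{2}$ simultaneously, as a naive application of the commutator inequality and Cauchy--Schwarz gives a worse coefficient on the mixed term; the decisive structural input is the trace-freeness of each $\hat A_\beta$, forced by the adapted frame, which lets the inner-product terms be controlled by $|\circo h|$ rather than $|h|$ and thereby keeps the cross-contributions below the threshold dictated by the target.
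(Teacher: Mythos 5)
Your plan is correct and is essentially the argument Andrews--Baker give in Section~3 of \cite{Andrews2010}, which is the source the paper cites for this lemma (the paper itself does not reprove it): subtract the evolution equations, substitute $c|H|^2 = |h|^2 + |\hat A|^2 + a - \mathcal{Q}$ and $(c-\nicefrac{1}{n})|H|^2 = |\circo h|^2 + |\hat A|^2 + a - \mathcal{Q}$, and reduce to the pointwise bound $|\langle A, A\rangle|^2 + |R^\perp|^2 \leq |h|^4 + 4|\circo h|^2|\hat A|^2 + \tfrac{3}{2}|\hat A|^4$, handled block-by-block in the adapted normal frame. One small imprecision in your diagnosis of the sharp constants: the coefficient $4$ on the mixed block comes from the combined inequality $\langle B, C\rangle^2 + |[B,C]|^2 \leq 2|B|^2|C|^2$ for symmetric matrices (applying Cauchy--Schwarz and the commutator bound separately would give $3|B|^2|C|^2$ per $\beta$ and hence $6|\circo h|^2|\hat A|^2$ overall); the trace-freeness of $\hat A_\beta$ is what replaces $|h|$ by $|\circo h|$ in these products, which is a separate and also necessary refinement, while the pure $\hat A$-block is handled by the Li--Li inequality yielding $\tfrac32|\hat A|^4$.
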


Note that by Proposition 6 in \cite{Andrews2010} we have 
\begin{align}
\label{eq:grad_trace}
|\nabla A|^2 \geq \frac{3}{n+2} |\nabla H|^2
\end{align}
so the gradient term on the right-hand side is nonpositive. At points where $\mathcal Q\leq 0$, each of the zeroth-order reaction terms is also nonpositive. From now on we suppose the initial submanifold $M_0$, and hence $M_t$ for all $t \in [0,T)$, is quadratically pinched with 
\[c \leq \frac{4}{3n} - \varepsilon_0, \qquad \varepsilon >0.\]
For ease of notation let us define
\[W :=  \bigg( \frac{4}{3n} - \frac{\varepsilon_0}{2}\bigg) |H|^2 - |A|^2, \qquad w:= W^\frac{1}{2},\]
and observe that by the quadratic pinching $W \geq \frac{\varepsilon_0}{2} |H|^2$ on $M_t$. 

\begin{lemma}
\label{lem:w_evol}
At each point in $M \times [0,T)$ we have the inequalities 
\begin{align*}
(\partial_t - \Delta) W \geq 2 |h|^2  W + \frac{(n+2)}{3} \varepsilon_0 |\nabla A|^2.
\end{align*}
and 
\begin{align*}
 (\partial_t - \Delta)w \geq |h|^2 w +  \delta_0 \frac{|\nabla A|^2}{|H|},
\end{align*}
where $\delta_0 >0$ depends only on $n$ and $\varepsilon_0$.
\end{lemma}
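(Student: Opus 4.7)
The plan is to recognise $W$ as minus an Andrews-Baker pinching quantity with a slightly relaxed constant, and then apply Lemma \ref{lem:pinch_pres} directly. Set $\tilde c := \tfrac{4}{3n} - \tfrac{\varepsilon_0}{2}$ and $\tilde{\mathcal Q} := |A|^2 - \tilde c |H|^2 + a$, so that $W = a - \tilde{\mathcal Q}$. Since the pinching hypothesis (with the smaller constant $c_0 = \tfrac{4}{3n}-\varepsilon_0 < \tilde c$) gives $|A|^2 - c_0|H|^2 + a \leq 0$, one checks immediately that $\tilde{\mathcal Q} \leq -\tfrac{\varepsilon_0}{2}|H|^2 \leq 0$ on all of $M\times[0,T)$. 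Because $\tilde c < \tfrac{4}{3n}$ and $a>0$, Lemma \ref{lem:pinch_pres} applies with $c$ replaced by $\tilde c$, and reversing the sign of the resulting inequality produces a lower bound for $(\partial_t-\Delta)W$.

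Next I would analyse the resulting inequality term by term. The zeroth-order $|h|^2$ terms combine cleanly: after the sign flip, $-2|h|^2\tilde{\mathcal Q} + 2a|h|^2 = 2|h|^2(a - \tilde{\mathcal Q}) = 2|h|^2 W$, which is precisely the dominant term we want. The remaining zeroth-order terms all carry the correct sign: $\tilde{\mathcal Q}\leq 0$ makes $-\tfrac{2}{n(\tilde c-1/n)}|\hat A|^2\tilde{\mathcal Q}$ nonnegative, and the coefficients $6 - \tfrac{2}{n(\tilde c-1/n)}$ and $3 - \tfrac{2}{n(\tilde c-1/n)}$ are strictly negative because $\tilde c - \tfrac{1}{n} < \tfrac{1}{3n}$. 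All of these contributions are nonnegative after the sign flip and may simply be discarded. For the gradient term, I would invoke \eqref{eq:grad_trace} in the form $|\nabla H|^2 \leq \tfrac{n+2}{3}|\nabla A|^2$ to compute
\begin{equation*}
|\nabla A|^2 - \tilde c|\nabla H|^2 \geq \Bigl(1 - \tfrac{4(n+2)}{9n} + \tfrac{(n+2)\varepsilon_0}{6}\Bigr)|\nabla A|^2 \geq \tfrac{(n+2)\varepsilon_0}{6}|\nabla A|^2,
\end{equation*}
using $9n \geq 4(n+2)$ for $n\geq 2$ to drop the constant contribution. Multiplying by $2$ gives the claimed coefficient $\tfrac{n+2}{3}\varepsilon_0$.

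The second inequality then follows from the first by a direct chain-rule calculation: since $w = W^{1/2}$ is smooth wherever $W>0$ (which is everywhere, by pinching),
\begin{equation*}
(\partial_t - \Delta)w = \frac{1}{2\sqrt W}(\partial_t - \Delta)W + \frac{|\nabla W|^2}{4 W^{3/2}} \geq \frac{1}{2w}(\partial_t - \Delta)W.
\end{equation*}
Substituting the first inequality yields $(\partial_t-\Delta)w \geq |h|^2 w + \tfrac{(n+2)\varepsilon_0}{6w}|\nabla A|^2$, and since $W \leq \tilde c|H|^2 \leq \tfrac{4}{3n}|H|^2$ gives $\tfrac{1}{w} \geq \tfrac{\sqrt{3n}}{2|H|}$, I can take $\delta_0 = \tfrac{(n+2)\varepsilon_0\sqrt{3n}}{12}$, which depends only on $n$ and $\varepsilon_0$. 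There is no real obstacle here; the only slightly subtle point is the algebraic trick of choosing $\tilde c$ strictly between $c_0$ and $\tfrac{4}{3n}$, which keeps Lemma \ref{lem:pinch_pres} available while leaving the margin $W \geq \tfrac{\varepsilon_0}{2}|H|^2$ that produces the positive $|\nabla A|^2$ coefficient at the end.
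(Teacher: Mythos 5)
Your proof is correct and follows essentially the same route as the paper: apply Lemma \ref{lem:pinch_pres} with the relaxed constant $\tilde c = \tfrac{4}{3n} - \tfrac{\varepsilon_0}{2}$, observe that the zeroth-order reaction terms are discardable and that $-2|h|^2\tilde{\mathcal Q} + 2a|h|^2 = 2|h|^2 W$, then control the gradient terms using \eqref{eq:grad_trace}. The passage to $w$ via the chain rule and the bound $W \leq \tfrac{4}{3n}|H|^2$ is also the same, yielding the identical constant $\delta_0$.
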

\begin{proof}
From Lemma \ref{lem:pinch_pres} we obtain
\begin{align*}
(\partial_t - \Delta) W\geq 2 |h|^2 W + 2|\nabla A|^2 -2 \bigg( \frac{4}{3n} - \frac{\varepsilon_0}{2}\bigg)|\nabla H|^2.
\end{align*}
Using \eqref{eq:grad_trace} we estimate 
\begin{align*}
|\nabla A|^2 - \bigg( \frac{4}{3n} - \frac{\varepsilon_0}{2}\bigg)|\nabla H|^2 &= \bigg(1 - \frac{(n+2)}{3} \bigg( \frac{4}{3n} - \frac{\varepsilon_0}{2}\bigg)\bigg) |\nabla A|^2\\
&+\bigg( \frac{4}{3n} - \frac{\varepsilon_0}{2}\bigg) \bigg( \frac{n+2}{3} |\nabla A|^2 - |\nabla H|^2\bigg)\\
& \geq \bigg(1 - \frac{4(n+2)}{9n} \bigg) |\nabla A|^2 + \frac{(n+2)}{6} \varepsilon_0|\nabla A|^2\\
&\geq \frac{(n+2)}{6} \varepsilon_0|\nabla A|^2,
\end{align*}
which gives the desired inequality for $W$. It follows that 
\begin{align*}
(\partial_t - \Delta) W^\frac{1}{2}  &=  \frac{1}{4 W^{3/2} } |\nabla W|^2 + \frac{1}{2 W^\frac{1}{2} } (\partial_t - \Delta) W\\
&\geq |h|^2 W^\frac{1}{2} +  \frac{(n+2)}{6} \varepsilon_0 \frac{|\nabla A|^2}{W^\frac{1}{2}},
\end{align*}
and since $W \leq \frac{4}{3n} |H|^2$ we have
\begin{align*}
(\partial_t - \Delta) w &\geq |h|^2 w +  \frac{(3n)^\frac{1}{2}}{2}\frac{(n+2)}{6} \varepsilon_0 \frac{|\nabla A|^2}{|H|}.
\end{align*}
Thus it suffices to take 
\[\delta_0  = \frac{(3n)^\frac{1}{2}}{2}\frac{(n+2)}{6} \varepsilon_0 .\]
\end{proof}

\subsection{The evolution of $h$}

From the equations for $A$ and $H$, we readily compute the projection $\langle A, H\rangle $ satisfies 
\begin{align*}
(\partial_t - \Delta) A_{ij\alpha} H_\alpha &= -2\nabla_p A_{ij\alpha} \nabla_p H_\alpha + 2 H_\alpha A_{ij\beta}A_{pq\beta} A_{pq\alpha} \\
&+H_\alpha(A_{iq\beta}A_{qp\beta} A_{pj\alpha}+A_{jq\beta}A_{qp\beta} A_{pi\alpha} - 2 A_{ip\beta}A_{jq\beta}A_{pq\alpha} ).
\end{align*}
The first of the reaction terms can be split into a hypersurface and a codimension component, as follows:
\begin{align*}
2 H_\alpha A_{ij\beta}A_{pq\beta} A_{pq\alpha} &= 2 A_{ij\beta} A_{kl\beta}h_{kl} H_1 \\
&= 2 h_{ij}h_{pq}h_{pq}H_1 + 2 \sum_{\beta \neq 1} A_{ij\beta}A_{pq\beta} h_{pq} H_1 \\
&= 2  h_{ij} H_1 |h|^2 + 2 \sum_{\beta\neq1}^n \hat A_{ij\beta} \hat A_{pq\beta} h_{pq}H_1.
\end{align*}
Similarly, the remaining reaction terms can be written as 
\begin{align*}
H_\alpha(A_{iq\beta}A_{qp\beta} A_{pj\alpha}&+A_{jq\beta}A_{qp\beta} A_{pi\alpha} - 2 A_{ip\beta}A_{jq\beta}A_{pq\alpha} )\\
 & = h_{ip}h_{pq}h_{qj}H_1 + h_{jp}h_{pq}h_{qi}H_1 - 2 h_{ip}h_{jq}h_{pq}H_1\\
&+ \sum_{\beta \neq 1} A_{ip\beta} A_{pq \beta} h_{qj} H_1 + \sum_{\beta \neq 1}A_{jp\beta}A_{pq\beta}h_{qi}H_1 - 2 \sum_{\beta \neq 1} A_{ip\beta}A_{jq\beta}h_{pq}H_1 \\
&=   \sum_{\beta \neq 1} \hat A_{ip\beta} \hat A_{pq \beta} h_{qj} H_1 + \sum_{\beta \neq 1}\hat A_{jp\beta}\hat A_{pq\beta}h_{qi}H_1 - 2 \sum_{\beta \neq 1} \hat A_{ip\beta}\hat A_{jq\beta}h_{pq}H_1.
\end{align*}
Therefore,
\begin{align*}
(\partial_t - \Delta) A_{ij\alpha} H_\alpha &= -2\nabla_p A_{ij\alpha} \nabla_p H_\alpha +2|h|^2  h_{ij} H_1  + 2 \sum_{\beta\neq1}^n \hat A_{ij\beta} \hat A_{pq\beta} h_{pq}H_1\\
&+\sum_{\beta \neq 1} \hat A_{ip\beta} \hat A_{pq \beta} h_{qj} H_1 + \sum_{\beta \neq 1}\hat A_{jp\beta}\hat A_{pq\beta}h_{qi}H_1 - 2 \sum_{\beta \neq 1} \hat A_{ip\beta}\hat A_{jq\beta}h_{pq}H_1.
\end{align*}

For a positive function $f$, we have 
\begin{align*}
(\partial_t-\Delta) \sqrt{f} & =\frac{1}{4 f^{3/2} } |\nabla f|^2 + \frac{1}{2 \sqrt{f} }  (\partial_t - \Delta) f,
\end{align*}
hence the quantity $|H|$ satisfies 
\begin{align*} 
(\partial_t - \Delta) |H|  & =\frac{1}{4 |H|^3} |\nabla |H|^2|^2 + \frac{ 1}{2 |H| }(- 2 |\nabla H|^2 + 2 |\langle A, H\rangle |^2)\\
&=  \frac{|\langle A, H\rangle |^2 }{|H|} - \frac{|\nabla H|^2}{|H|}+ \frac{1}{|H|^3} \langle H, \nabla_i H\rangle  \langle H, \nabla_i H\rangle.
\end{align*}
There holds
\[\frac{|\langle A, H\rangle |^2 }{|H|} = |\langle A, |H|^{-1} H\rangle|^2  |H|= |h|^2 H_1\]
and 
\begin{align*}
- \frac{|\nabla H|^2}{|H|}+ \frac{1}{|H|^3}& \langle H, \nabla_i H\rangle  \langle H, \nabla_i H\rangle \\
&= -\frac{1}{H_1} ( H_1^2 |\nabla \nu_1|^2 + |\nabla H_1|^2) + \frac{1}{H_1} \langle \nu_1 , H_1 \nabla \nu_1 + \nabla H_1 \nu_1\rangle \\
& = - H_1 |\nabla \nu_1|^2
\end{align*}
so we have 
\begin{align} 
\label{eq:H_evol}
(\partial_t - \Delta) |H|  &  =  |h|^2 H_1 - H_1|\nabla \nu_1|^2. 
\end{align}

For a tensor $B_{ij}$ divided by a positive scalar function $f$ there holds 
\begin{align*}
(\nabla_{\partial_t} - \Delta) \frac{B_{ij}}{f} = \frac{1}{f} (\nabla_{\partial_t} - \Delta) B_{ij} - \frac{B_{ij}}{f^2} (\partial_t - \Delta) f + \frac{2}{f} \bigg\langle \nabla \frac{B_{ij}}{f}, \nabla f \bigg\rangle,
\end{align*}
Therefore, dividing $\langle A, H\rangle$ by $|H|$, we obtain 
\begin{align*}
(\nabla_{\partial_t} - \Delta) h_{ij} &=  |h|^2 h_{ij}  +2 H_1^{-1} \sum_{\beta\neq1}^n \hat A_{ij\beta} \hat A_{pq\beta} h_{pq}H_1 +H_1^{-1}\sum_{\beta \neq 1} \hat A_{ip\beta} \hat A_{pq \beta} h_{qj} H_1 \\
&+ H_1^{-1}\sum_{\beta \neq 1}\hat A_{jp\beta}\hat A_{pq\beta}h_{qi}H_1 - 2 H_1^{-1}\sum_{\beta \neq 1} \hat A_{ip\beta}\hat A_{jq\beta}h_{pq}H_1\\
&-2H_1^{-1} \langle \nabla A_{ij} ,\nabla H \rangle + h_{ij} |\nabla \nu_1|^2  +2 H_1^{-1} \langle \nabla h_{ij}, \nabla H_1 \rangle. 
\end{align*}
Let us introduce the abbreviation
\begin{align}
\label{eq:T_def}
T_{ij} &:= 2 \sum_{\beta\neq1}^n \hat A_{ij\beta} \hat A_{pq\beta} h_{pq}+\sum_{\beta \neq 1} \hat A_{ip\beta} \hat A_{pq \beta} h_{qj}  \notag \\
&+ \sum_{\beta \neq 1}\hat A_{jp\beta}\hat A_{pq\beta}h_{qi} - 2 \sum_{\beta \neq 1} \hat A_{ip\beta}\hat A_{jq\beta}h_{pq},
\end{align}
so that we may write 
\begin{align*}
(\nabla_{\partial_t} - \Delta) h_{ij} &=  |h|^2 h_{ij}  +T_{ij} -2H_1^{-1} \langle \nabla A_{ij}, \nabla H \rangle \\
&+ h_{ij} |\nabla \nu_1|^2 + 2 H_1^{-1} \langle \nabla h_{ij}, \nabla H_1 \rangle. 
\end{align*}
We simplify the gradient terms by decomposing
\begin{align*}
- 2\langle \nabla A_{ij} , \nabla H\rangle & = - 2\langle \nabla h_{ij} \nu_1 + h_{ij} \nabla \nu_1 + \nabla \hat A_{ij}, \nabla H_1 \nu_1 + 2H_1 \nabla \nu_1 \rangle\\
&= - 2\langle \nabla h_{ij} , \nabla H_1\rangle - 2H_1 h_{ij} |\nabla \nu_1|^2 - 2\langle \nabla \hat A_{ij}, \nabla H_1 \nu_1 \rangle \\
&- 2H_1 \langle \nabla \hat A_{ij}, \nabla \nu_1 \rangle, 
\end{align*} 
and so obtain:
\begin{lemma}
At each point in $M\times [0,T)$ there holds
\begin{align*}
(\nabla_{\partial_t} - \Delta) h_{ij} &=  |h|^2 h_{ij}  +T_{ij}-  h_{ij} |\nabla \nu_1|^2   - 2H_1^{-1} \langle \nabla \hat A_{ij}, \nabla H_1 \nu_1 \rangle\\
&- 2\langle \nabla \hat A_{ij}, \nabla \nu_1 \rangle ,
\end{align*}
where $T_{ij}$ is the quantity defined in \eqref{eq:T_def}. 
\end{lemma}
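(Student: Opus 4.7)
The plan is to derive the stated identity as a direct consolidation of the two displayed computations immediately preceding the lemma. The first of these already gives
\begin{align*}
(\nabla_{\partial_t} - \Delta) h_{ij} &= |h|^2 h_{ij} + T_{ij} - 2H_1^{-1}\langle \nabla A_{ij}, \nabla H \rangle \\
&\quad + h_{ij} |\nabla \nu_1|^2 + 2H_1^{-1} \langle \nabla h_{ij}, \nabla H_1 \rangle,
\end{align*}
and the second expands $-2\langle \nabla A_{ij},\nabla H\rangle$ using the decompositions $A_{ij} = h_{ij}\nu_1 + \hat A_{ij}$ and $H = H_1 \nu_1$. The lemma is nothing more than the result of substituting the second identity into the first.

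First I would verify the expansion of the gradient cross term. Writing $\nabla A_{ij} = \nabla h_{ij}\,\nu_1 + h_{ij}\nabla \nu_1 + \nabla \hat A_{ij}$ and $\nabla H = \nabla H_1\,\nu_1 + H_1 \nabla \nu_1$ and expanding the inner product term by term, the two mixed products proportional to $\langle \nu_1,\nabla \nu_1\rangle$ vanish by the orthogonality $\langle \nu_1,\nabla \nu_1\rangle = 0$ (obtained by differentiating $|\nu_1|^2 = 1$). The four surviving terms recover the claimed decomposition, in which the pieces containing $\nabla \hat A_{ij}$ are deliberately not reduced further — one notes in particular that $\langle \nabla \hat A_{ij},\nu_1\rangle$ need not vanish, since differentiating $\langle \hat A_{ij},\nu_1\rangle=0$ only gives $\langle \nabla \hat A_{ij},\nu_1\rangle = -\langle \hat A_{ij},\nabla \nu_1\rangle$.

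Next, I would divide that decomposition through by $H_1$ and substitute into the first displayed equation. Two cancellations occur: the $-2H_1^{-1}\langle \nabla h_{ij},\nabla H_1\rangle$ exactly cancels the $+2H_1^{-1}\langle \nabla h_{ij},\nabla H_1\rangle$ already present, and $-2h_{ij}|\nabla \nu_1|^2$ combines with the standalone $+h_{ij}|\nabla \nu_1|^2$ to leave $-h_{ij}|\nabla \nu_1|^2$. The remaining terms $-2H_1^{-1}\langle \nabla \hat A_{ij},\nabla H_1\nu_1\rangle$ and $-2\langle \nabla \hat A_{ij},\nabla \nu_1\rangle$ carry over untouched, which is exactly the stated formula.

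Since this amounts to an algebraic repackaging of calculations already carried out in the subsection, there is no real obstacle; the only points requiring care are the orthogonality $\nabla \nu_1 \perp \nu_1$ used to discard the mixed terms, and the sign and scalar-factor bookkeeping when dividing through by $H_1$.
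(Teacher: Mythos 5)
Your argument is correct and is exactly the paper's own: the lemma is obtained by expanding $-2H_1^{-1}\langle\nabla A_{ij},\nabla H\rangle$ via $A_{ij}=h_{ij}\nu_1+\hat A_{ij}$ and $H=H_1\nu_1$, discarding the $\langle\nu_1,\nabla\nu_1\rangle$ cross terms, and substituting into the preceding evolution identity so that the $\pm 2H_1^{-1}\langle\nabla h_{ij},\nabla H_1\rangle$ terms cancel and $-2h_{ij}|\nabla\nu_1|^2+h_{ij}|\nabla\nu_1|^2=-h_{ij}|\nabla\nu_1|^2$. Your remark that $\langle\nabla\hat A_{ij},\nu_1\rangle$ need not vanish is a correct observation (and you also silently correct a typographical factor of $2$ in $\nabla H=\nabla H_1\,\nu_1+H_1\nabla\nu_1$ appearing in the paper's intermediate line).
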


Since $h$ is a symmetric bilinear form it has $n$ real eigenvalues, which we denote by 
\[\lambda_1 \leq \dots \leq \lambda_n.\]
The smallest eigenvalue can be written as 
\[\lambda_1(x,t) = \min_{|v| = 1} h(x,t) (v,v),\]
and is therefore a Lipschitz continuous function on $M \times [0,T)$. We will use the evolution equation for $h$ to estimate $(\partial_t -\Delta)\lambda_1$, interpreted in an appropriate weak sense (cf. \cite{White} and \cite{Langford17}).

\begin{definition}
Let $f:M\times [0,T) \to \mathbb{R}$ be locally Lipschitz continuous and fix a point $(x_0,t_0) \in M \times (0,T)$. We say that a function $\varphi$ is a lower support for $f$ at $(x_0,t_0)$ if $\varphi$ is $C^2$ on the set $B_{g(t_0)}(x_0,r) \times [-r^2 +t_0,t_0]$ for some $r >0$ and there holds
\[f(x,t) \geq \varphi(x,t),\]
with equality at $(x_0,t_0)$. If the inequality is reversed then $\varphi$ is an upper support for $f$ at $(x_0,t_0)$. 
\end{definition}

With this definition in place we have the following estimate:

\begin{lemma}
\label{lem:lambda_1_evol_1} 
Fix $(x_0,t_0) \in M \times (0,T)$ and suppose $\varphi$ is a lower support for $\lambda_1$ at $(x_0,t_0)$. Then at $(x_0,t_0)$ there holds 
\begin{align*}
(\partial_t - \Delta) \varphi &\geq |h|^2 \varphi  +T_{11}-  \varphi |\nabla \nu_1|^2   - 2H_1^{-1} \langle \nabla \hat A_{11}, \nabla H_1 \nu_1 \rangle\\
&- 2\langle \nabla \hat A_{11}, \nabla \nu_1 \rangle
\end{align*}
\end{lemma}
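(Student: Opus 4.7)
The approach is the standard eigenvector-extension device adapted to parabolic weak support functions, in the spirit of \cite{White} and \cite{Langford17}. At $(x_0,t_0)$, choose an orthonormal tangent frame $\{e_i\}$ diagonalising $h$, with $h_{11}(x_0,t_0) = \lambda_1(x_0,t_0)$, alongside the normal frame already fixed by $\nu_1 = H/|H|$. Extend $e_1$ to a smooth unit vector field $v$ on a spacetime neighbourhood of $(x_0,t_0)$, chosen so that $\nabla v = 0$ and $\nabla_{\partial_t} v = 0$ at the base point (for instance by parallel transport along radial $g(t_0)$-geodesics from $x_0$, followed by parallel transport in time along the flow line through $x_0$). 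Form the auxiliary scalar $\psi(x,t) := h_{ij}(x,t)\,v^i(x,t)\,v^j(x,t)$. The Rayleigh quotient characterisation $\lambda_1 = \min_{|u|=1} h(u,u)$ and $|v| \equiv 1$ imply $\psi \geq \lambda_1$ throughout the neighbourhood, with equality at $(x_0,t_0)$.

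Combining this with the lower support hypothesis $\varphi \leq \lambda_1$ gives $\psi - \varphi \geq 0$ on the parabolic neighbourhood, with equality at $(x_0,t_0)$. Hence $(x_0,t_0)$ is a backward-parabolic minimum of $\psi - \varphi$, and the elementary parabolic calculus delivers $\Delta(\psi-\varphi)(x_0,t_0) \geq 0$ and $\partial_t(\psi-\varphi)(x_0,t_0) \leq 0$. Consequently $(\partial_t - \Delta)(\psi - \varphi)(x_0,t_0) \leq 0$, i.e.\ $(\partial_t - \Delta)\varphi \geq (\partial_t - \Delta)\psi$ at the base point. It remains to compute $(\partial_t - \Delta)\psi$ and identify it with the $(1,1)$-component of the evolution equation for $h_{ij}$ derived in the preceding lemma.

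Expanding $(\partial_t - \Delta)(h_{ij} v^i v^j)$ via the Leibniz rule and evaluating at $(x_0,t_0)$, the vanishing of $\nabla v$ and $\nabla_{\partial_t} v$ at the base point eliminates every cross term except the residual correction $-2 h_{ij}\,(\Delta v)^i\, v^j$ arising from the second spatial derivative of $v$. Differentiating the identity $|v|^2 \equiv 1$ twice and evaluating where $\nabla v = 0$ forces $v \cdot \Delta v = 0$; in the chosen frame this reads $(\Delta v)^1 = 0$, and since $h$ is diagonal at $(x_0,t_0)$ one has $h_{i1} = \lambda_i\,\delta_{i1}$, so the correction reduces to $\lambda_1 (\Delta v)^1 = 0$. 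Therefore $(\partial_t - \Delta)\psi|_{(x_0,t_0)} = (\nabla_{\partial_t} - \Delta)h_{ij}|_{(x_0,t_0)}\,e_1^i e_1^j$, and substituting the previous lemma with $i=j=1$, while replacing $h_{11}(x_0,t_0)$ by $\varphi(x_0,t_0)$ in the zeroth-order coefficient, yields the claimed inequality.

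I do not anticipate a serious obstacle. The argument is entirely routine once the frame is set up, and the only minor subtlety is the bookkeeping of the $\Delta v$ correction, which vanishes exactly because of the interplay between the unit-norm constraint and the diagonalisation of $h$ at the base point.
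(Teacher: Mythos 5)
Your proposal is correct and follows essentially the same route as the paper: extend the minimising eigendirection to a unit vector field $v$ parallel at $(x_0,t_0)$, form the smooth upper barrier $\psi = h(v,v) \geq \lambda_1 \geq \varphi$ touching $\varphi$ at the base point, deduce $(\partial_t-\Delta)\varphi \geq (\partial_t-\Delta)\psi$ from the backward-parabolic minimum, and observe that the only surviving correction $-2h_{ij}(\Delta v)^i v^j$ vanishes because $\langle v,\Delta v\rangle = 0$ combined with the diagonality of $h$ at $(x_0,t_0)$. This is exactly the paper's argument with $\eta$ in place of your $\psi$.
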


\begin{proof}
We choose at the point $(x_0,t_0)$ an orthonormal basis of tangent vectors $\{e_i\}$ which are such that 
\[h(x_0,t_0) (e_i,e_i) = \lambda_i,\]
and extend the $\{e_i\}$ to a spatial neighbourhood of $x_0$ by parallel transport with respect to $g(t_0)$. We then extend to an orthonormal frame on a backward spacetime neighbourhood of $(x_0,t_0)$ by parallel transport with respect to the connection $\nabla_{\partial_t}$. On this neighbourhood we can define a smooth function 
\[\eta(x,t) := h(x,t)(e_1(x,t) ,e_1(x,t)).\]
Observe that by the definition of $\lambda_1$ there holds
\[\eta(x,t) \geq  \lambda_1(x,t) \geq \varphi(x,t),\]
with equality at $(x_0,t_0)$. 

It follows that at the point $(x_0,t_0)$ we have
\[\partial_t \eta \leq \partial_t \varphi, \qquad \Delta \eta \geq \Delta \varphi,\]
hence
\[(\partial_t - \Delta) \varphi \geq (\partial_t - \Delta) \eta.\]
At $(x_0,t_0)$ we compute
\begin{align*}
\partial_t \eta & = \nabla_{\partial_t} h(e_1, e_1) + 2 h(e_1, \nabla_{\partial_t} e_1)\\
& = \Delta h_{11} +|h|^2 h_{11}  +T_{11}-  h_{11} |\nabla \nu_1|^2   - 2H_1^{-1} \langle \nabla \hat A_{11}, \nabla H_1 \nu_1 \rangle\\
&- 2\langle \nabla \hat A_{11}, \nabla \nu_1 \rangle \\
& = \Delta h_{11} +|h|^2 \varphi  +T_{11}-  \varphi |\nabla \nu_1|^2   - 2H_1^{-1} \langle \nabla \hat A_{11}, \nabla H_1 \nu_1 \rangle\\
&- 2\langle \nabla \hat A_{11}, \nabla \nu_1 \rangle,
\end{align*}
and 
\begin{align*}
\Delta h_{11} &= \nabla_i (\nabla_i (h_{11}) - 2 h(e_1, \nabla_i e_1))\\
&=\Delta (h_{11}) - 2 h(e_1, \Delta e_1)\\
& = \Delta \eta - 2 h (e_1, \Delta e_1).
\end{align*}
On the other hand, at $(x_0,t_0)$ there holds 
\begin{align*}
\langle e_1, \Delta e_1\rangle = \nabla_k \langle e_1, \nabla_k e_1 \rangle =0,
\end{align*}
which shows that $\Delta e_1$ is orthogonal to $e_1$, so since $h$ is diagonal at $(x_0,t_0)$ we obtain
\begin{align*}
\Delta h_{11} = \Delta \eta,
\end{align*}
and consequently 
\begin{align*}
\partial_t \eta & = \Delta \eta +|h|^2 \varphi  +T_{11}-  \varphi |\nabla \nu_1|^2   - 2H_1^{-1} \langle \nabla \hat A_{11}, \nabla H_1 \nu_1 \rangle\\
&- 2\langle \nabla \hat A_{11}, \nabla \nu_1 \rangle.
\end{align*}
It follows then
\begin{align*}
(\partial_t - \Delta) \varphi &\geq (\partial_t -\Delta)\eta\\
		& = |h|^2 \varphi  +T_{11}-  \varphi |\nabla \nu_1|^2   - 2H_1^{-1} \langle \nabla \hat A_{11}, \nabla H_1 \nu_1 \rangle- 2\langle \nabla \hat A_{11}, \nabla \nu_1 \rangle
\end{align*}
at $(x_0,t_0)$ as required.
\end{proof}

Eventually we will want to prove integral estimates for the function $\lambda_1$. To do so we appeal to Alexandrov's theorem, following Brendle \cite{Brendle15} (see also \cite{Langford17}). We call a function $f:M\times[0,T) \to \mathbb{R}$ locally semiconvex (resp. semiconcave) if about every $(x_0,t_0)$ there is a small open neighbourhood on which $f$ can be expressed as the sum of a smooth and a convex (resp. concave) function. 

\begin{lemma}
\label{lem:alex}
Let $f:M\times[0,T) \to \mathbb{R}$ be locally semiconvex. Then $f$ is twice differentiable almost everywhere in $M\times[0,T)$, and if $\varphi$ is a nonnegative Lipschitz function on $M$ then for each $t \in [0,T)$ there holds
\[\int_M \Delta f \cdot \varphi \, d\mu_t \leq -\int_M \langle \nabla f, \nabla \varphi \rangle \,d\mu_t.\]
Here $\mu_t$ is the measure induced by the immersion $F(\cdot,t)$. 
\end{lemma}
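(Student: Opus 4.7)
The plan is to reduce the claim to the classical Alexandrov theorem for convex functions on Euclidean space via local coordinates. Local semiconvexity of $f$ on $M \times [0,T)$ restricts to local semiconvexity of $f(\cdot,t)$ on $M$ at each fixed $t$, since the restriction of a convex function to an affine slice of its domain remains convex. It therefore suffices to argue slice-by-slice: fix $t$ and choose a countable cover of $M$ by coordinate charts $(U, x)$ on each of which $f(\cdot,t) = \sigma + v$, with $\sigma$ smooth and $v$ convex in the coordinates $x$. The classical Alexandrov theorem asserts that $v$, and hence $f(\cdot,t)$, is twice differentiable almost everywhere on $U$, yielding the pointwise twice-differentiability assertion after taking a union of charts.

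The key input for the integral inequality is that the distributional Hessian $D^2 v$ of a convex function $v$ is a matrix-valued nonnegative Radon measure whose absolutely continuous part coincides almost everywhere with the pointwise Alexandrov Hessian. Writing the Riemannian Laplacian in divergence form $\Delta u = |g|^{-1/2}\partial_i\bigl(|g|^{1/2} g^{ij}\partial_j u\bigr)$ in the chart and integrating by parts (valid since $f$ is locally Lipschitz), for any nonnegative Lipschitz $\varphi$ compactly supported in $U$ we obtain an identity of the form
\[-\int_M \langle \nabla f, \nabla \varphi\rangle\, d\mu_t = \int_U \varphi\, g^{ij}\sqrt{|g|}\, d(D^2 v)_{ij} + \mathcal{R},\]
where $\mathcal{R}$ collects absolutely continuous terms arising from $\sigma$ and from derivatives of $g^{ij}\sqrt{|g|}$ contracted with $\nabla v$. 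Because $g^{ij}(t)$ is positive-definite and $D^2 v \geq 0$ as a matrix-valued measure, $g^{ij}\, d(D^2 v)_{ij}$ is a nonnegative scalar Radon measure; its absolutely continuous part, combined with $\mathcal{R}$, reassembles to $\Delta f\cdot d\mu_t$ at points of twice differentiability of $f$. Discarding the nonnegative singular part yields the inequality, and a partition of unity patches this to a global inequality on $M$.

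The main technical subtlety is the positivity of the singular part of $g^{ij}(t)\, d(D^2 v)_{ij}$, together with the bookkeeping needed to convert between coordinate and Riemannian Laplacians without losing the sign. This is most cleanly handled by mollification: approximate $v$ by smooth convex functions $v_\varepsilon = v * \rho_\varepsilon$, for which standard integration by parts gives the equality $-\int \langle \nabla v_\varepsilon, \nabla \varphi\rangle\, d\mu_t = \int \varphi\, \Delta v_\varepsilon\, d\mu_t$. Since $\nabla v_\varepsilon \to \nabla v$ in $L^1_{\mathrm{loc}}$, and the measures $\Delta v_\varepsilon\, d\mu_t$ form a uniformly locally bounded family of nonnegative Radon measures whose weak-$*$ limit has absolutely continuous part equal to the pointwise Alexandrov Laplacian of $v$, passing $\varepsilon \to 0$ recovers the desired inequality up to the smooth $\sigma$-contribution, which is handled by ordinary integration by parts.
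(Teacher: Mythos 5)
Your argument is essentially the paper's: reduce to the Euclidean Alexandrov theorem in charts, identify the distributional Hessian of the convex part as a nonnegative matrix-valued Radon measure whose absolutely continuous part is the pointwise Alexandrov Hessian, contract with $g^{ij}$ to obtain a scalar Radon measure with nonnegative singular part, and drop that singular part for nonnegative test functions. The paper cites Sections 6.3 and 6.4 of Evans--Gariepy for precisely these facts and dismisses the passage from $C^2$ to Lipschitz test functions with the phrase ``by approximation''; you supply the mollification and partition-of-unity bookkeeping explicitly, which is a difference of detail rather than of method.

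One point does need repair. For the first assertion of the lemma, that $f$ is twice differentiable almost everywhere on $M\times[0,T)$, your slice-by-slice reduction is insufficient: applying Alexandrov to $f(\cdot,t)$ for each fixed $t$ only yields a second-order spatial Taylor expansion at a.e.\ $(x,t)$, not a second-order expansion jointly in $(x,t)$. The stronger spacetime statement is what is actually needed, since the set $Q$ of twice-differentiability points is used in Lemma \ref{lem:lambda_1_evol_2} to make pointwise sense of $(\partial_t-\Delta)\lambda_1$, which involves a time derivative. The fix is immediate and is what the paper does: local semiconvexity is by definition a property of $f$ as a function of the $(n+1)$ variables $(x,t)$, so one should apply Alexandrov's theorem in spacetime coordinates on a chart of $M\times[0,T)$ directly. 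Your slice-by-slice reduction is correct and appropriate only for the integral inequality, which is indeed stated and proved on a fixed time-slice.
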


\begin{proof}
Choosing local coordinates and applying Alexandrov's theorem \cite[Section 6.4]{Ev-Gar}, we see that $f$ has two derivatives at a.e. point in $M\times[0,T)$. Furthermore, by \cite[Section 6.3]{Ev-Gar}, for each $t \in [0,T)$ there is a singular Radon measure $\chi$ on $M$ with the property that
\[\int_M \Delta f \cdot \varphi  \,d\mu_t + \int_M \varphi \,d\chi = - \int_M \langle \nabla f, \nabla \varphi \rangle\,d\mu_t\]
for every $\varphi \in C^2(M)$ . Hence if $\varphi \geq 0$ there holds  
\[\int_M \Delta f \cdot \varphi  \,d\mu_t \leq  - \int_M \langle \nabla f, \nabla \varphi \rangle\,d\mu_t.\]
By approximation, the same inequality also holds if $\varphi$ is only Lipschitz continuous.
\end{proof}

Since $h$ is smooth, on every small enough set in spacetime, $\lambda_1$ can be expressed as the minimum over a set of smooth functions which is compact in $C^2$. This is sufficient to ensure that $\lambda_1$ is locally semiconcave on $M\times[0,T)$, so by the lemma we conclude that there is a set of full measure $Q \subset M\times [0,T)$ where $\lambda_1$ is twice differentiable. 

\begin{lemma}
\label{lem:lambda_1_evol_2}
At each point in $Q$ there holds 
\begin{align*}
(\partial_t - \Delta) \lambda_1 &\geq |h|^2 \lambda_1  +T_{11}-  \lambda_1 |\nabla \nu_1|^2   - 2H_1^{-1} \langle \nabla \hat A_{11}, \nabla H_1 \nu_1 \rangle\\
&- 2\langle \nabla \hat A_{11}, \nabla \nu_1 \rangle.
\end{align*}
\end{lemma}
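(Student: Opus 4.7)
The plan is to deduce this from the computation already carried out in the proof of Lemma \ref{lem:lambda_1_evol_1}, using the hypothesis that $\lambda_1$ is twice differentiable at the base point as a substitute for having a $C^2$ lower support. At a point $(x_0,t_0) \in Q$ I would reconstruct the same smooth function $\eta$ used before: fix an orthonormal basis $\{e_i\}$ of $T_{x_0}M$ that diagonalizes $h(x_0,t_0)$ with $e_1$ realizing the minimum eigenvalue, extend $\{e_i\}$ by spatial parallel transport along $M_{t_0}$ and then backward in time via the connection $\nabla_{\partial_t}$, and set $\eta(x,t) := h(x,t)(e_1(x,t),e_1(x,t))$. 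By the variational characterization of $\lambda_1$, one has $\eta \geq \lambda_1$ on this backward spacetime neighbourhood, with equality at $(x_0,t_0)$.

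The central step is to pass the parabolic operator from $\eta$ to $\lambda_1$. Since $\lambda_1$ is twice differentiable at $(x_0,t_0)$, the nonnegative function $\eta(\cdot,t_0) - \lambda_1(\cdot,t_0)$ attains its minimum at $x_0$, so its spatial Hessian at $x_0$ is positive semidefinite in an a.e.\ sense, giving $\Delta\eta(x_0,t_0) \geq \Delta\lambda_1(x_0,t_0)$. Along the curve $x = x_0$, the function $\eta(x_0,t) - \lambda_1(x_0,t) \geq 0$ for $t \in [t_0 - r^2,t_0]$ with equality at $t_0$, and since $\partial_t\lambda_1(x_0,t_0)$ exists as a genuine derivative by hypothesis, the left derivative of $\eta - \lambda_1$ at $t_0$ (which equals the full derivative for $\eta$) is nonpositive, yielding $\partial_t\eta(x_0,t_0) \leq \partial_t\lambda_1(x_0,t_0)$. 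Together these give $(\partial_t - \Delta)\lambda_1 \geq (\partial_t - \Delta)\eta$ at $(x_0,t_0)$. The computation of $(\partial_t - \Delta)\eta$ at $(x_0,t_0)$ already appears in the proof of Lemma \ref{lem:lambda_1_evol_1}: it rests only on the evolution equation for $h_{ij}$, the diagonal form of $h$ in the chosen frame at $(x_0,t_0)$, and the identity $\langle e_1,\Delta e_1\rangle = 0$ there, all of which remain valid. Substituting $h_{11}(x_0,t_0) = \lambda_1(x_0,t_0)$ in the zeroth-order terms produces the right-hand side of the lemma verbatim.

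The main subtlety is the one-sidedness of the temporal comparison: because the parallel-transported frame is only defined on a backward neighbourhood, the inequality $\eta \geq \lambda_1$ is also only known on a backward neighbourhood, so a priori we control only the left time derivative of $\eta - \lambda_1$ at $t_0$. Restricting attention to the set $Q$ of twice differentiability is precisely what promotes this to an inequality on the full time derivative of $\lambda_1$, and hence to the pointwise parabolic inequality we want; no further regularity of $\lambda_1$ is needed and the semiconcave/Alexandrov machinery is used only to guarantee that $Q$ has full measure.
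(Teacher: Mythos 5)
Your argument is correct, and it is a valid (and arguably more explicit) version of what the paper does. The paper's proof of Lemma~\ref{lem:lambda_1_evol_2} routes through Lemma~\ref{lem:lambda_1_evol_1}: it takes a $C^2$ lower support $\varphi$ for $\lambda_1$ at $(x_0,t_0)$ and applies the earlier lemma. What the paper leaves implicit is why $(\partial_t-\Delta)\varphi(x_0,t_0)$ controls $(\partial_t-\Delta)\lambda_1(x_0,t_0)$ --- for a generic lower support one in fact has $(\partial_t-\Delta)\lambda_1\leq(\partial_t-\Delta)\varphi$, which goes the wrong way. The point is that at $(x_0,t_0)\in Q$ one should take $\varphi$ to be (a small perturbation of) the second-order Taylor polynomial of $\lambda_1$, so that $(\partial_t-\Delta)\varphi$ converges to $(\partial_t-\Delta)\lambda_1$ in a limit. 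Your proposal instead compares the smooth barrier $\eta$ directly against $\lambda_1$: since $\eta\geq\lambda_1$ on a backward neighbourhood with equality at $(x_0,t_0)$ and $\lambda_1$ has an Alexandrov second-order expansion there, you get $\Delta\eta\geq\Delta\lambda_1$ and $\partial_t\eta\leq\partial_t\lambda_1$ pointwise, hence $(\partial_t-\Delta)\lambda_1\geq(\partial_t-\Delta)\eta$, and the right-hand side is computed exactly as in the proof of Lemma~\ref{lem:lambda_1_evol_1}. This bypasses the construction of an auxiliary support function and makes transparent exactly where membership in $Q$ is used. One small phrasing issue: when you say the spatial Hessian of $\eta(\cdot,t_0)-\lambda_1(\cdot,t_0)$ at $x_0$ is ``positive semidefinite in an a.e.\ sense,'' that is imprecise --- at a point of $Q$ the second-order Taylor coefficients of $\lambda_1$ exist as genuine numbers, and the minimality of the nonnegative difference at $x_0$ forces that actual Hessian to be positive semidefinite; it is the set $Q$ that is of full measure, not the inequality that holds only almost everywhere.
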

\begin{proof}
Fix a point $(x_0,t_0)\in Q$. Then $\lambda_1$ admits a lower support $\varphi$ at $(x_0,t_0)$, to which we can apply Lemma \ref{lem:lambda_1_evol_1}. Since $\varphi(x_0,t_0) = \lambda_1(x_0,t_0)$, this gives the desired inequality.
\end{proof}

\begin{remark}
Notice that the first of the gradient terms is nonnegative whenever $\lambda_1 \leq 0$, whereas the remaining gradient terms both contain $\nabla \hat A$ as a factor. It is this structure of the gradient terms which allows us to prove the convexity estimate. 
\end{remark}

\subsection{The evolution of $|\hat A|^2$}

The following evolution equation for $|\hat A|^2$ was derived by Naff \cite{Naff2019}:
\begin{align*}
(\partial_t - \Delta) |\hat A|^2 &= 2 |\langle \hat A , \hat A\rangle |^2 + 2 \sum_{i,j} \bigg| \sum_k (\hat A_{ik} \otimes \hat A_{jk} - \hat A_{jk} \otimes \hat A_{ik} )\bigg|^2 + 2 \sum_\alpha |R^{\perp}_{ij1 \alpha}|^2\\
&- 2|\nabla \hat A|^2 + 4 \sum_{i,j,k} (\langle \nabla_k \circo h_{ij}, \nu_1 \rangle - H_1^{-1} \circo h_{ij} \nabla_k H_1 ) \langle \hat A_{ij}, \nabla_k \nu_1\rangle.
\end{align*}
We make use of the quantity 
\[v := \frac{|\hat A|^2}{|H|}.\]
\begin{lemma}
\label{lem:v_evol}
There is a positive constant $C = C(n)$ such that 
\begin{align*}
(\partial_t - \Delta) v &\leq C|A|^2 |\hat A| + C \frac{|\hat A|}{H_1} \frac{|\nabla A|^2}{H_1} - 2\frac{|\nabla \hat A|^2}{H_1} 
\end{align*}
holds on $M\times [0,T)$. 
\end{lemma}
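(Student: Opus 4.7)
The plan is to apply the quotient rule for the heat operator to $v = |\hat A|^2/H_1$, namely
\begin{align*}
(\partial_t - \Delta)v = \frac{1}{H_1}(\partial_t - \Delta)|\hat A|^2 - \frac{|\hat A|^2}{H_1^2}(\partial_t - \Delta)H_1 + \frac{2}{H_1}\langle \nabla v, \nabla H_1\rangle,
\end{align*}
and then substitute Naff's evolution equation for $|\hat A|^2$ (recorded just above the lemma) together with equation \eqref{eq:H_evol} for $H_1 = |H|$, organising the resulting expression into a reaction group, a main negative gradient term, and several mixed gradient terms to be absorbed.

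For the reaction part I would argue that each of Naff's cubic terms $2|\langle \hat A,\hat A\rangle|^2$, $2\sum_{i,j}\bigl|\sum_k(\hat A_{ik}\otimes\hat A_{jk} - \hat A_{jk}\otimes\hat A_{ik})\bigr|^2$ and $2\sum_\alpha |R^\perp_{ij1\alpha}|^2$ is quartic in $A$ with at least two factors of $\hat A$ and is therefore bounded pointwise by $C(n)|A|^2|\hat A|^2$; after dividing by $H_1$ and using the pinching estimate $|\hat A|\leq |A|\leq C H_1$, this collapses to $C|A|^2|\hat A|$ as required. The piece $-(|\hat A|^2/H_1)|h|^2$ coming from substituting \eqref{eq:H_evol} is nonpositive and may simply be discarded.

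The gradient part rests on one key pointwise bound: $|\nabla \nu_1|^2 \leq H_1^{-2}|\nabla H|^2 \leq C(n) H_1^{-2}|\nabla A|^2$, which follows from the orthogonal decomposition $\nabla H = \nabla H_1\,\nu_1 + H_1\nabla\nu_1$ used already to derive \eqref{eq:H_evol}. This immediately controls the residual $(|\hat A|^2/H_1)|\nabla\nu_1|^2$ from $(\partial_t-\Delta)H_1$ by $C|\hat A||\nabla A|^2/H_1^2$ after one more application of pinching. The Naff cross term
\[4\sum_{i,j,k}\bigl(\langle\nabla_k \circo h_{ij},\nu_1\rangle - H_1^{-1}\circo h_{ij}\nabla_k H_1\bigr)\langle\hat A_{ij},\nabla_k\nu_1\rangle\]
can be bounded the same way: $|\nabla \circo h|\leq |\nabla A|$, $|\nabla H_1|\leq |\nabla H|\leq C|\nabla A|$ and pinching yield a pointwise bound of order $|\hat A||\nabla A|^2/H_1$, so after dividing by $H_1$ one picks up $C|\hat A||\nabla A|^2/H_1^2$ with no Young's inequality needed. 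Finally, the transport term $(2/H_1)\langle\nabla v,\nabla H_1\rangle$ expands to a nonpositive piece $-2|\hat A|^2|\nabla H_1|^2/H_1^3$ plus a cross term $4\langle\hat A,\nabla\hat A\rangle\cdot\nabla H_1/H_1^2$; Young's inequality applied to the latter yields a small multiple of $|\nabla\hat A|^2/H_1$, which is absorbed into the good term $-2|\nabla\hat A|^2/H_1$ from Naff, together with a remainder bounded by $C|\hat A||\nabla A|^2/H_1^2$.

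The main obstacle is really just bookkeeping: one must check that every mixed gradient term factors as $|\nabla A|\cdot|\nabla A|\cdot|\hat A|/H_1^2$, using the estimates $|\nabla \circo h|,|\nabla H_1|\leq |\nabla A|$ and $|\nabla\nu_1|\leq C|\nabla A|/H_1$ together with pinching to trade surplus factors of $|\hat A|/H_1$ for constants, and that the small Young's remainder stays strictly below the Naff coefficient so the net gradient contribution remains negative. The precise value $-2$ written in the statement is inessential for subsequent applications, and any negative multiple of $|\nabla\hat A|^2/H_1$ would suffice.
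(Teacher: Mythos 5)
Your proposal follows the paper's proof almost step for step: quotient rule for $v=|\hat A|^2/H_1$, the pointwise bound $|\nabla\nu_1|\le C|\nabla A|/H_1$ from the orthogonal decomposition $\nabla H=\nabla H_1\,\nu_1+H_1\nabla\nu_1$, the reaction bound $C|A|^2|\hat A|^2$, pinching to trade one $|\hat A|$ for $H_1$, and direct control of the Naff cross-gradient term. The one place you diverge is the transport term $\tfrac{2}{H_1}\langle\nabla v,\nabla H_1\rangle$: you use Young's inequality to absorb its cross piece into $-2|\nabla\hat A|^2/H_1$, which as you note only yields $-(2-\epsilon)|\nabla\hat A|^2/H_1$, whereas the paper bounds $\tfrac{4}{H_1^2}\hat A_{ij}\langle\nabla\hat A_{ij},\nabla H_1\rangle$ directly by Cauchy--Schwarz using $|\nabla\hat A|\,|\nabla H_1|\le C|\nabla A|^2$, so no absorption is needed and the coefficient $-2$ survives intact. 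Your observation that the precise constant is inessential for the later choice of $\Lambda$ is correct, but the direct bound is both simpler and gives the lemma exactly as stated.
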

\begin{proof}
We use the formula
\begin{align*}
(\partial_t - \Delta) \frac{f_1}{f_2} = \frac{1}{f_2} (\partial_t - \Delta) f_1 - \frac{f_1}{f_2^2} (\partial_t - \Delta) f_2 + \frac{2}{f_2} \bigg\langle \nabla \frac{f_1}{f_2}, \nabla f_2 \bigg\rangle
\end{align*}
to derive
\begin{align*}
(\partial_t - \Delta) v &= \frac{1}{|H|}  (\partial_t - \Delta) |\hat A|^2 - \frac{|\hat A|^2}{|H|^2} (\partial_t - \Delta) |H| + \frac{2}{|H|} \bigg\langle \nabla \frac{|\hat A|^2}{|H|}, \nabla |H|\bigg\rangle.
\end{align*}
Let us estimate 
\begin{align*}
\frac{2}{|H|} \bigg\langle \nabla \frac{|\hat A|^2}{|H|}, \nabla |H|\bigg\rangle & = \frac{2}{H_1} \bigg\langle \frac{1}{H_1} \nabla |\hat A|^2 - \frac{|\hat A|^2}{H_1^2} \nabla H_1, \nabla H_1\bigg\rangle \\
& = \frac{4}{H_1^2} \hat A_{ij}\langle  \nabla \hat A_{ij}, \nabla H_1\rangle  - 2 \frac{|\hat A|^2}{H_1^2} \frac{|\nabla H_1|^2}{H_1}\\
& \leq C(n) \frac{|\hat A|}{H_1} \frac{|\nabla A|^2}{H_1} ,
\end{align*}
where in the last line we have used $|A|^2 \leq \frac{4}{3n} |H|^2$. By \eqref{eq:H_evol} we have
\begin{align*} 
- \frac{|\hat A|^2}{|H|^2}(\partial_t - \Delta) |H|  &  = - \frac{|\hat A|^2}{H_1^2}( |h|^2 H_1 - H_1|\nabla \nu_1|^2) \\
&\leq \frac{|\hat A|^2}{H_1} \bigg| \nabla \frac{H}{H_1} \bigg|^2\\
&\leq C(n) \frac{|\hat A|}{H_1} \frac{|\nabla A|^2}{H_1},
\end{align*}
so there holds 
\begin{align}
\label{eq:v_evol_1}
(\partial_t - \Delta) v &\leq \frac{1}{H_1}  (\partial_t - \Delta) |\hat A|^2  + C(n) \frac{|\hat A|}{H_1} \frac{|\nabla A|^2}{H_1}.
\end{align}

We recall
\begin{align*}
(\partial_t - \Delta) |\hat A|^2 &= 2 |\langle \hat A , \hat A\rangle |^2 + 2 \sum_{i,j} \bigg| \sum_k (\hat A_{ik} \otimes \hat A_{jk} - \hat A_{jk} \otimes \hat A_{ik} )\bigg|^2 + 2 \sum_\alpha |R^{\perp}_{ij1 \alpha}|^2\\
&- 2|\nabla \hat A|^2 + 4 \sum_{i,j,k} (\langle \nabla_k \circo h_{ij}, \nu_1 \rangle - H_1^{-1} \circo h_{ij} \nabla_k H_1 ) \langle \hat A_{ij}, \nabla_k \nu_1\rangle,
\end{align*}
and estimate
\begin{align*}
(\partial_t - \Delta) |\hat A|^2 & \leq C(n)|\hat A|^4 + 2 \sum_\alpha |R^{\perp}_{ij1 \alpha}|^2 - 2|\nabla \hat A|^2 + C(n) \frac{|\hat A|}{H_1} |\nabla A|^2.
\end{align*}
Then since 
\begin{align*}
R^\perp_{ij\alpha \beta} = A_{ip\alpha} A_{jp\beta} - A_{ip\beta}A_{jp\alpha}
\end{align*}
we can write 
\begin{align*}
2 \sum_\alpha |R^{\perp}_{ij1 \alpha}|^2 = 2 \sum_{\alpha \geq 2} |h_{ip} \hat A_{jp\alpha} - \hat A_{ip\alpha} h_{jp}|^2
\end{align*}
and use this to bound
\begin{align*}
(\partial_t - \Delta) |\hat A|^2 & \leq C(n)|A|^2 |\hat A|^2 + C(n) \frac{|\hat A|}{H_1} |\nabla A|^2 - 2|\nabla \hat A|^2 .
\end{align*}
Substituting this inequality into \eqref{eq:v_evol_1} and using the quadratic pinching gives the desired estimate.
\end{proof}

\subsection{Modifying $\lambda_1$}

We now form the quantity 
\[f(x,t) := -\lambda_1(x,t) - \varepsilon w(x,t) + \Lambda v(x,t),\]
where $\varepsilon$ and $\Lambda$ are positive constants to be chosen later. Combining the evolution equations for the three components we obtain:

\begin{lemma}
\label{lem:f_evol}
At each point in $Q$ there holds 
\begin{align*}
(\partial_t - \Delta )f &\leq |h|^2 f + C(1+ \Lambda) |A|^2 |\hat A| - ( f + \varepsilon w )|\nabla \nu_1|^2 \\
&  - \bigg( \frac{\varepsilon \delta_0}{2}- C\Lambda  \frac{|\hat A|}{H_1} \bigg) \frac{|\nabla A|^2}{H_1} - \bigg(2\Lambda - \frac{C}{\varepsilon \delta_0}\bigg)\frac{|\nabla \hat A|^2}{H_1},
\end{align*}
where $C = C(n)$. 
\end{lemma}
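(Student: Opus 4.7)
The plan is to add the three evolution inequalities from Lemmas \ref{lem:lambda_1_evol_2}, \ref{lem:w_evol}, and \ref{lem:v_evol} (with the appropriate signs), and then use a short list of standard estimates to reorganise the resulting terms into the form claimed. Throughout, $C$ denotes a constant depending only on $n$ that may change from line to line, and we use $H_1 = |H|$ freely.

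First I would combine the zeroth-order terms. From Lemma \ref{lem:lambda_1_evol_2} one has $(\partial_t - \Delta)(-\lambda_1) \leq -|h|^2 \lambda_1 - T_{11} + \lambda_1 |\nabla \nu_1|^2 + (\text{cross gradient terms})$; from Lemma \ref{lem:w_evol} one has $(\partial_t - \Delta)(-\varepsilon w) \leq -\varepsilon |h|^2 w - \varepsilon \delta_0 |\nabla A|^2/H_1$; and from Lemma \ref{lem:v_evol} one has $(\partial_t - \Delta)(\Lambda v) \leq C\Lambda |A|^2|\hat A| + C\Lambda(|\hat A|/H_1)|\nabla A|^2/H_1 - 2\Lambda|\nabla \hat A|^2/H_1$. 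The $|h|^2$ terms combine via
\[
-|h|^2\lambda_1 - \varepsilon |h|^2 w = |h|^2(f - \Lambda v),
\]
and since $-\Lambda |h|^2 v$ has a favourable sign it may simply be dropped. To handle $-T_{11}$, read off from \eqref{eq:T_def} the bound $|T_{11}| \leq C|\hat A|^2 |h| \leq C|A|^2 |\hat A|$ (using $|h| \leq |A|$ and $|\hat A| \leq |A|$), which when combined with the $C\Lambda |A|^2|\hat A|$ term from $v$ produces the $C(1+\Lambda)|A|^2 |\hat A|$ factor in the statement.

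Next I would rearrange the $|\nabla \nu_1|^2$ contribution. Using $\lambda_1 = -f - \varepsilon w + \Lambda v$,
\[
\lambda_1 |\nabla \nu_1|^2 = -(f + \varepsilon w)|\nabla \nu_1|^2 + \Lambda v |\nabla \nu_1|^2,
\]
giving exactly the $-(f+\varepsilon w)|\nabla \nu_1|^2$ required. The leftover $\Lambda v|\nabla \nu_1|^2$ must then be absorbed into the gradient terms: the computation leading to \eqref{eq:H_evol} yields $|\nabla \nu_1|^2 \leq |\nabla H|^2/H_1^2$, so
\[
\Lambda v |\nabla \nu_1|^2 \leq \Lambda \frac{|\hat A|^2}{H_1}\cdot \frac{|\nabla H|^2}{H_1^2} \leq C\Lambda\frac{|\hat A|}{H_1}\cdot \frac{|\nabla A|^2}{H_1},
\]
where I used $|\hat A|/H_1 \leq C(n)$ (from the quadratic pinching) and $|\nabla H|^2 \leq \tfrac{n+2}{3}|\nabla A|^2$. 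This contributes exactly the subtracted $C\Lambda |\hat A|/H_1$ term in the coefficient of $-|\nabla A|^2/H_1$.

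The final and main technical step is to absorb the cross gradient terms $2H_1^{-1}\langle \nabla \hat A_{11}, \nabla H_1\, \nu_1\rangle + 2\langle \nabla \hat A_{11}, \nabla \nu_1\rangle$ from Lemma \ref{lem:lambda_1_evol_2}. By Cauchy--Schwarz and Young's inequality with a parameter $\alpha$,
\[
2H_1^{-1}|\nabla \hat A_{11}||\nabla H_1| + 2|\nabla \hat A_{11}||\nabla \nu_1| \leq 2\alpha\,\frac{|\nabla H|^2}{H_1} + \frac{2}{\alpha}\,\frac{|\nabla \hat A|^2}{H_1},
\]
again using $|\nabla \nu_1| \leq |\nabla H|/H_1$. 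Choosing $\alpha$ of order $\varepsilon\delta_0/(n+2)$ so that $2\alpha(n+2)/3 = \varepsilon\delta_0/2$, this is bounded by $\tfrac{\varepsilon\delta_0}{2}|\nabla A|^2/H_1 + (C/(\varepsilon\delta_0))|\nabla \hat A|^2/H_1$. Combining with the $-\varepsilon\delta_0 |\nabla A|^2/H_1$ term from $w$ and the $-2\Lambda|\nabla \hat A|^2/H_1$ term from $v$ produces precisely the claimed gradient expression. Assembling all of the above yields the lemma. The only genuinely delicate point is ensuring that the Young splitting of the two cross terms eats up only half of the good $\varepsilon\delta_0 |\nabla A|^2/H_1$ term, leaving room for $\Lambda v|\nabla \nu_1|^2$ to be absorbed in the same slot.
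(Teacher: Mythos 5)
Your proof is correct and follows essentially the same route as the paper: combine the three evolution inequalities, rewrite $-\lambda_1-\varepsilon w$ as $f-\Lambda v$, decompose $\lambda_1|\nabla\nu_1|^2$ using the same identity, bound $-T_{11}$ by $C|A|^2|\hat A|$, absorb $\Lambda v|\nabla\nu_1|^2$ into $C\Lambda\frac{|\hat A|}{H_1}\frac{|\nabla A|^2}{H_1}$, and Young-split the two cross-gradient terms against half of $\varepsilon\delta_0|\nabla A|^2/H_1$ and the $|\nabla\hat A|^2/H_1$ slot. One small inaccuracy in your closing remark: the $\Lambda v|\nabla\nu_1|^2$ term is \emph{not} absorbed into the remaining $\frac{\varepsilon\delta_0}{2}|\nabla A|^2/H_1$; it simply contributes a second copy of $C\Lambda\frac{|\hat A|}{H_1}\frac{|\nabla A|^2}{H_1}$, which is folded into the $C\Lambda\frac{|\hat A|}{H_1}$ part of the coefficient. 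The reason for retaining $\frac{\varepsilon\delta_0}{2}$ rather than using up all of $\varepsilon\delta_0$ in the Young step is that the Stampacchia iteration in Section \ref{sec:Stamp} needs a genuinely negative $|\nabla A|^2$ contribution.
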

\begin{proof}
At any point in $Q$ we compute 
\begin{align*}
(\partial_t - \Delta ) f = - (\partial_t - \Delta )\lambda_1 -\varepsilon(\partial_t - \Delta ) w+\Lambda (\partial_t - \Delta )v,
\end{align*}
so by Lemma \ref{lem:lambda_1_evol_2},
\begin{align*}
(\partial_t - \Delta )f &\leq -|h|^2 \lambda_1  -T_{11}+  \lambda_1 |\nabla \nu_1|^2   +2H_1^{-1} \langle \nabla \hat A_{11}, \nabla H_1 \nu_1 \rangle\\
&+ 2\langle \nabla \hat A_{11}, \nabla \nu_1 \rangle-\varepsilon(\partial_t - \Delta ) w+ \Lambda (\partial_t - \Delta )v.
\end{align*}
Inserting the estimates from Lemma \ref{lem:w_evol} and Lemma \ref{lem:v_evol} we find
\begin{align*}
(\partial_t - \Delta )f &\leq |h|^2 (-\lambda_1 - \varepsilon w)  -T_{11}+ \lambda_1 |\nabla \nu_1|^2   + 2H_1^{-1} \langle \nabla \hat A_{11}, \nabla H_1 \nu_1 \rangle\\
&+ 2\langle \nabla \hat A_{11}, \nabla \nu_1 \rangle-\varepsilon \delta_0 \frac{|\nabla A|^2}{H_1}+ \Lambda \bigg( C|A|^2 |\hat A| + C \frac{|\hat A|}{H_1} \frac{|\nabla A|^2}{H_1} - 2\frac{|\nabla \hat A|^2}{H_1}\bigg),
\end{align*}
where $C = C(n)$. Using the definition of $f$ and rearranging we obtain
\begin{align*}
(\partial_t - \Delta )f &\leq |h|^2 (f - \Lambda v)  -T_{11}+ C \Lambda |A|^2 |\hat A| + (- f - \varepsilon w + \Lambda v)|\nabla \nu_1|^2 \\
&  + 2H_1^{-1} \langle \nabla \hat A_{11}, \nabla H_1 \nu_1 \rangle+ 2\langle \nabla \hat A_{11}, \nabla \nu_1 \rangle + C\Lambda  \frac{|\hat A|}{H_1} \frac{|\nabla A|^2}{H_1}\\
&-\varepsilon \delta_0 \frac{|\nabla A|^2}{H_1}  - 2\Lambda\frac{|\nabla \hat A|^2}{H_1}.
\end{align*}
Next we estimate 
\begin{align*}
-T_{11} &= 2  \sum_{\beta\neq1}^n \hat A_{ij\beta} \hat A_{pq\beta} h_{pq} +\sum_{\beta \neq 1} \hat A_{ip\beta} \hat A_{pq \beta} h_{qj}  \\
&+\sum_{\beta \neq 1}\hat A_{jp\beta}\hat A_{pq\beta}h_{qi} - 2 \sum_{\beta \neq 1} \hat A_{ip\beta}\hat A_{jq\beta}h_{pq}\\
&\leq C(n) |A|^2 |\hat A|
\end{align*}
and
\begin{align*}
2H_1^{-1} \langle \nabla \hat A_{11}, \nabla H_1 \nu_1 \rangle + 2\langle \nabla \hat A_{11}, \nabla \nu_1 \rangle &\leq C(n) H_1^{-1} |\nabla \hat A| |\nabla A|\\
&\leq \frac{\varepsilon \delta_0}{2} \frac{|\nabla A|^2}{H_1} + \frac{C(n)}{\varepsilon \delta_0} \frac{|\nabla \hat A|^2}{H_1}
\end{align*}
in order to obtain
\begin{align*}
(\partial_t - \Delta )f &\leq |h|^2 (f - \Lambda v) + C(1+ \Lambda) |A|^2 |\hat A| - ( f + \varepsilon w -\Lambda v)|\nabla \nu_1|^2 \\
&  -\frac{\varepsilon \delta_0}{2} \frac{|\nabla A|^2}{H_1}  + C\Lambda  \frac{|\hat A|}{H_1} \frac{|\nabla A|^2}{H_1} + \bigg(\frac{C}{\varepsilon \delta_0} - 2\Lambda \bigg)\frac{|\nabla \hat A|^2}{H_1}.
\end{align*}
Finally, by bounding
\begin{align*}
\Lambda v |\nabla \nu_1|^2 \leq C(n) \Lambda \frac{|\hat A|}{H_1} \frac{|\nabla A|^2}{H_1} ,
\end{align*}
we find
\begin{align*}
(\partial_t - \Delta ) f &\leq |h|^2 (f-\Lambda v) + C(1+ \Lambda) |A|^2 |\hat A| - ( f + \varepsilon w )|\nabla \nu_1|^2 \\
&  - \bigg( \frac{\varepsilon \delta_0}{2}- C\Lambda  \frac{|\hat A|}{H_1} \bigg) \frac{|\nabla A|^2}{H_1} - \bigg(2\Lambda - \frac{C}{\varepsilon \delta_0}\bigg)\frac{|\nabla \hat A|^2}{H_1}.
\end{align*}
\end{proof}

\section{A Poincar\'{e} inequality}

\label{sec:Poinc}

In this section we establish a Poincar\'{e}-type inequality for the high codimension solution $M_t$. The proof loosely follows \cite[Lemma 5.4]{Huisken84}, in that we combine Simons' identity with an integration by parts argument. We also incorporate an idea from \cite[Proposition 3.1]{Bren-Huisk17}, where the authors symmetrise and then take the square of Simons' identity to fully exploit the structure of the cubic zeroth-order terms. 

Simons' identity for high codimension submanifolds states that
\begin{align*}
\nabla _k \nabla_ l A _{ij\alpha }  & = \nabla _i \nabla_j A_{kl \alpha } + A_{ kl \beta}  A _{ ip \beta} A _{ jp \alpha} - A_{ ij \beta} A _{ kp \beta} A _{ lp \alpha}\\
&+ A_{ jl \beta}  A_{ ip \beta} A _{ kp \alpha} + A_{ jk \beta} A _{ ip \beta} A _{ lp \alpha}-A _{ il \beta} A _{ kp \beta} A _{ jp \alpha}- A_{ jl \beta}A _{ kp \beta}A _{ ip \alpha}.
\end{align*}
We symmetrise to get 
\begin{align*}
\nabla _k \nabla_ l A _{ij\alpha } +\nabla_l\nabla_k  A_{ji\alpha}  =& \ \nabla _i \nabla_j A_{kl \alpha } + \nabla_j\nabla_i A_{lk\alpha} + E_{klij\alpha}.
\end{align*}
where
\begin{align*}
E_{klij\alpha} &= A_{ kl \beta}  A _{ ip \beta} A _{ jp \alpha} + A_{lk\beta} A_{jp\beta} A_{ip\alpha}\\
&- A_{ ij \beta} A _{ kp \beta} A _{ lp \alpha}-A_{ji\beta} A_{lp\beta}A_{kp\alpha}\\
&+ A_{ jl \beta}  A_{ ip \beta} A _{ kp \alpha} +A_{ik\beta} A_{jp\beta}A_{lp\alpha}\\
&+ A_{ jk \beta}  A _{ ip \beta} A _{ lp \alpha}+ A_{il\beta} A_{jp\beta}A_{kp\alpha}\\
&-A _{ il \beta} A _{ kp \beta} A _{ jp \alpha} -A_{jk\beta}A_{lp\beta}A_{ip\alpha}\\
&- A_{ jl \beta} A _{ kp \beta}A _{ ip \alpha}-A_{ik\beta} A_{lp\beta}A_{jp\alpha}.
\end{align*}
Using the relation
\begin{align*}
 R^\perp_{ij \alpha \beta}  =  A_{ip\alpha} A_{jp \beta} - A_{ip\beta} A_{jp\alpha}
\end{align*}
we can rewrite the components of $E$ as
\begin{align*}
E_{klij\alpha}=& \ A_{kl\beta} ( A_{ip\beta} A_{jp \alpha} + A_{jp\beta} A_{ip\alpha} ) - A_{ij \beta} (A_{kp\beta}A_{lp \alpha}+A_{lp\beta} A_{kp\alpha} ) \\
&+A_{jl\beta} (A_{ip\beta} A_{kp\alpha} - A_{kp\beta}A_{ip\alpha} ) + A_{jk\beta} ( A_{ip\beta}A_{lp\alpha} - A_{lp\beta}A_{ip\alpha } ) \\
&+A_{ik\beta} (A_{lp\alpha}A_{jp\beta} - A_{lp\beta} A_{jp\alpha} ) + A_{il\beta}( A_{kp\alpha} A_{jp\beta} - A_{kp\beta} A_{jp \alpha})\\
=& \ A_{kl\beta} ( A_{ip\beta} A_{jp \alpha} + A_{jp\beta} A_{ip\alpha} ) - A_{ij \beta} (A_{kp\beta}A_{lp \alpha}+A_{lp\beta} A_{kp\alpha} ) \\
& + A_{jl\beta} R^\perp_{ki\alpha\beta} + A _{ jk\beta} R^\perp _{li\alpha\beta} + A_{ik\beta} R^\perp_{lj\alpha\beta} + A _{il\beta}R^\perp _{ kj\alpha\beta}\\
= &  2 A_{kl\beta}  A_{ip\beta} A_{jp \alpha} - 2A_{ij \beta} A_{kp\beta}A_{lp \alpha}  + A_{kl\beta} R^\perp_{ij\alpha \beta} - A_{ij\beta} R^\perp_{kl\alpha\beta}\\
& + A_{jl\beta} R^\perp_{ki\alpha\beta} + A _{ jk\beta} R^\perp _{li\alpha\beta} + A_{ik\beta} R^\perp_{lj\alpha\beta} + A _{il\beta}R^\perp _{ kj\alpha\beta}.
\end{align*}

\begin{lemma}
There is a positive constant $C=C(n)$ such that 
\[|E|^2 \geq 8|h|^2 \tr (h^4) - 8 \tr(h^3)^2 - C|A|^5|\hat A|.\]
\end{lemma}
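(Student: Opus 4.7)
The plan is to extract from $E$ the part involving only the ``hypersurface'' component $h$ of $A$, show that this part already accounts for the full quartic lower bound, and control everything else in terms of $|\hat A|$. Working in the frame where $\nu_1 = H/|H|$ we have $A_{ij1} = h_{ij}$ and $A_{ij\alpha} = \hat A_{ij\alpha}$ for $\alpha \neq 1$, and crucially $R^\perp_{ij 1 1} = 0$ by antisymmetry. So if we restrict the expression for $E_{klij\alpha}$ to $\alpha = 1$ and extract only the $\beta = 1$ contribution in each summed term, all the $R^\perp$ terms vanish and what remains is
\[ P_{klij} := 2 h_{kl}(h^2)_{ij} - 2 h_{ij}(h^2)_{kl}, \]
where $(h^2)_{ij} := h_{ip}h_{jp}$. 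Write $E_{klij\alpha} = P_{klij}\,\delta_{\alpha 1} + R_{klij\alpha}$, so that every term in $R$ carries at least one factor of $\hat A$.

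Next I would compute $|P|^2$ directly. Expanding the square and using $h_{ij}(h^2)_{ij} = \tr(h^3)$ and $|h^2|^2 = \tr(h^4)$ gives
\begin{align*}
|P|^2 &= 4\sum_{i,j,k,l}\bigl(h_{kl}(h^2)_{ij} - h_{ij}(h^2)_{kl}\bigr)^2 \\
&= 8|h|^2 \tr(h^4) - 8\tr(h^3)^2,
\end{align*}
which is exactly the leading term we want. Writing $|E|^2 = |P|^2 + 2\langle P, R\rangle + |R|^2 \geq |P|^2 - 2|P|\,|R|$ reduces everything to bounding $|P|\,|R|$.

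For the cross term, I would enumerate the cases for $R_{klij\alpha}$ according to whether $\alpha = 1$ or $\alpha \neq 1$, and whether the summation index $\beta$ equals $1$: the remaining combinations give terms with exactly one, two, or three factors of $\hat A$, hence
\[ |R| \leq C(n)\bigl(|h|^2|\hat A| + |h||\hat A|^2 + |\hat A|^3\bigr) \leq C(n)|A|^2|\hat A|. \]
Combined with $|P| \leq C(n)|h|^3 \leq C(n)|A|^3$, this yields $2|P|\,|R| \leq C(n) |A|^5|\hat A|$, and the desired inequality follows.

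The main obstacle, and really the only delicate point, is to verify that after extracting the pure-$h$ piece from each of the six $A A A$ terms in $E_{klij1}$, the resulting tensor collapses exactly to the antisymmetric combination $2h_{kl}(h^2)_{ij} - 2h_{ij}(h^2)_{kl}$; the two pairs of triple products contribute while the four $R^\perp$-type terms vanish, which is precisely where the Brendle--Huisken symmetrisation is paying off. Once that is set up cleanly, the remaining estimates are Cauchy--Schwarz and counting powers of $\hat A$.
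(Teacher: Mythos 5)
Your proof is correct and follows essentially the same strategy as the paper: both isolate the pure-$h$ piece $P_{klij} = 2h_{kl}(h^2)_{ij} - 2h_{ij}(h^2)_{kl}$ from $E$, compute $|P|^2 = 8|h|^2\tr(h^4) - 8\tr(h^3)^2$ exactly, and bound the remainder (everything carrying a factor of $\hat A$) by Cauchy--Schwarz and power counting. The only organizational difference is that the paper first splits $E$ into the triple-product tensor $U$ and the $R^\perp$-tensor $V$ and then decomposes $U$ further, while you extract $P$ directly and lump the rest into a single remainder $R$; this is a minor cosmetic streamlining, not a different argument.
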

\begin{proof}
Let us decompose $E$ as 
\[E_{klij\alpha} = U_{klij\alpha} + V_{klij\alpha}\]
where 
\begin{align*}
U_{klij\alpha} &:= 2 A_{kl\beta}  A_{ip\beta} A_{jp \alpha} - 2A_{ij \beta} A_{kp\beta}A_{lp \alpha} ,\\
V_{klij\alpha} &:= A_{kl\beta} R^\perp_{ij\alpha \beta} - A_{ij\beta} R^\perp_{kl\alpha\beta} + A_{jl\beta} R^\perp_{ki\alpha\beta}\\
& + A _{ jk\beta} R^\perp _{li\alpha\beta} + A_{ik\beta} R^\perp_{lj\alpha\beta} + A _{il\beta}R^\perp _{ kj\alpha\beta}.
\end{align*}
There then holds 
\[|E|^2 = |U|^2 + 2 \langle U, V\rangle + |V|^2.\]
Breaking $U$ into components parallel and orthogonal to the mean curvature vector we obtain 
\begin{align*}
U_{klij} &= 2 \langle A_{kl} ,A_{ip} \rangle A_{jp} - 2\langle A_{ij}, A_{kp}\rangle A_{lp}\\
 & = 2 h_{kl} h_{ip}  A_{jp} - 2 h_{ij}h_{kp} A_{lp} + 2 \langle \hat A_{kl}, \hat A_{ip} \rangle A_{jp} - 2 \langle \hat A_{ij}, \hat A_{lp} \rangle A_{lp}\\
& =  2 h_{kl} h_{ip}  h_{jp} \nu_1  - 2 h_{ij}h_{kp} h_{lp} \nu_1+ 2 h_{kl} h_{ip}  \hat A_{jp} - 2 h_{ij}h_{kp} \hat A_{lp}\\
&+ 2 \langle \hat A_{kl}, \hat A_{ip} \rangle A_{jp} - 2 \langle \hat A_{ij}, \hat A_{lp} \rangle A_{lp},
\end{align*}
hence 
\begin{align*}
|U|^2 &\geq 4 | h_{kl} h_{ip}  h_{jp}  - h_{ij}h_{kp} h_{lp} |^2 - C(n) |\hat A| |A|^5+ 4 |\langle \hat A_{kl}, \hat A_{ip} \rangle A_{jp} -  \langle \hat A_{ij}, \hat A_{lp} \rangle A_{lp}|^2 \\
& \geq 8 |h|^2 \tr(h^4)-8\tr(h^3)^2 - C(n) |\hat A||A|^5.
\end{align*}
Substituting this back in we arrive at 
\begin{align*}
|E|^2 \geq 8 |h|^2 \tr(h^4)-8\tr(h^3)^2 - 2 |U||V| - C(n) |\hat A||A|^5. 
\end{align*}
There is a purely dimensional constant $C$ such that 
\[|V| \leq C|A||R^\perp|,\]
and we have
\[R^{\perp}_{ij1\beta} = h_{ip} \hat A_{jp\beta} - \hat A_{ip\beta} h_{jp}\]
and 
\[R^{\perp}_{ij\alpha\beta} = \hat A_{ip\alpha} A_{jp\beta} - A_{ip\beta} \hat A_{jp\alpha},\qquad \alpha \geq 2,\]
so for a larger constant $C$ there holds
\[|V| \leq C |A|^2 |\hat A|.\]
Since $|U| \leq C|A|^3$ we have 
\[|E|^2 \geq 8 |h|^2 \tr(h^4)-8\tr(h^3)^2 - C|A|^5 |\hat A|.\]
\end{proof}

We are now ready to prove the Poincar\'{e} inequality. The proof does not actually use the fact that $M_t$ moves by mean curvature, so this result can be viewed as a general statement about high codimension submanifolds.

\begin{proposition}
Fix $t \in [0,T)$ and let $u:M\to \mathbb{R}$ be a nonnegative Lipschitz function which is supported in $\supp(f(\cdot,t))$. Then there is a positive constant $C = C(n,\varepsilon_0,\varepsilon,\Lambda)$ such that 
\begin{align*}
\int_M |h|^2 u^2  \, d\mu_t & \leq C \int_M u^2  \frac{|\nabla A|^2}{|A|^2} \, d\mu_t +C\int_M u |\nabla u| \frac{|\nabla A|}{|A|} \,d \mu_t + C \int_M  |A| |\hat A| u^2\,d\mu_t.
\end{align*}
\end{proposition}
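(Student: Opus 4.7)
The plan is to pair the pointwise estimate
\[|E|^2 \geq 8|h|^2 \tr(h^4) - 8\tr(h^3)^2 - C|A|^5 |\hat A|\]
from the preceding lemma with the symmetrised Simons identity
\[E_{klij\alpha} = \bigl(\nabla_k \nabla_l A_{ij\alpha} + \nabla_l \nabla_k A_{ji\alpha}\bigr) - \bigl(\nabla_i \nabla_j A_{kl\alpha} + \nabla_j \nabla_i A_{lk\alpha}\bigr),\]
and then to integrate $|E|^2$ weighted by $u^2/|A|^4$ over $M_t$. The cubic lower bound supplies the $|h|^2$ on the left of the desired Poincar\'e inequality, while the Simons identity expresses one factor of $E$ as a combination of $\nabla^2 A$ terms that can be integrated by parts.

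The first step is the pointwise bound $|E|^2/|A|^4 \geq c|h|^2 - C|A||\hat A|$ on $\supp(f(\cdot,t))$, with $c,C$ depending on $n,\varepsilon_0,\varepsilon,\Lambda$. The eigenvalue identity
\[|h|^2 \tr(h^4) - \tr(h^3)^2 = \tfrac{1}{2}\sum_{i,j} \lambda_i^2 \lambda_j^2 (\lambda_i - \lambda_j)^2\]
reduces the task to bounding the sum on the right from below by a constant multiple of $|h|^2 |A|^4$. On $\supp(f)$ one has $-\lambda_1 + \Lambda v > \varepsilon w \geq c(\varepsilon,\varepsilon_0)|H|$, so either $-\lambda_1 \geq \delta |H|$ for some $\delta = \delta(n,\varepsilon_0,\varepsilon,\Lambda)>0$, or $v \geq \delta'|H|$ and hence $|\hat A|^2 \geq \delta'|H|^2$. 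In the latter case the estimate $|h|^2 \leq C|A||\hat A|$ is immediate, so the left-hand side of the proposition is absorbed directly into the $|A||\hat A|$ term on the right. In the former case, since $\sum_i \lambda_i = H_1 > 0$ there must exist an index $k$ with $\lambda_k \geq H_1/n$, and the single summand $\lambda_1^2 \lambda_k^2 (\lambda_1 - \lambda_k)^2$ is then bounded below by a constant multiple of $|H|^6$; the pinching $|H|^2 \sim |A|^2 \sim |h|^2$ converts this into the required bound.

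For the second step, one writes $|E|^2 = \langle E, E\rangle$, expands one factor via the symmetrised Simons identity, and integrates by parts. Because $E$ is cubic in $A$, we have $|E| \leq C|A|^3$ and $|\nabla E| \leq C|A|^2 |\nabla A|$; while $\nabla(u^2/|A|^4)$ produces contributions of the form $u|\nabla u|/|A|^4$ and $u^2 |\nabla A|/|A|^5$. Combining these, every term appearing after integration by parts collapses into one of the forms $u^2 |\nabla A|^2 /|A|^2$ or $u|\nabla u|\,|\nabla A|/|A|$. Feeding the resulting estimate back into the Step~1 lower bound completes the argument.

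The principal obstacle is Step~1. The cubic quantity $|h|^2 \tr(h^4) - \tr(h^3)^2$ vanishes at umbilical configurations, which are \emph{not} excluded by the pinching assumption alone; some additional input is needed to rule them out quantitatively. The support condition $\supp(u) \subset \supp(f(\cdot,t))$ is exactly what provides this input, and the dependence of $C$ on both $\varepsilon$ and $\Lambda$ enters entirely through the threshold $\delta$ separating the two cases above.
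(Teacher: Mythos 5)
Your proposal is correct, and it establishes the key pointwise lower bound in a somewhat different way from the paper. The integration-by-parts step (weighting $|E|^2$ by $u^2/|A|^4$, expanding one factor of $E$ via the symmetrised Simons identity, and using $|E| \leq C|A|^3$, $|\nabla E| \leq C|A|^2|\nabla A|$) is identical to the paper's. Where you diverge is in proving that $|h|^2 \operatorname{tr}(h^4) - \operatorname{tr}(h^3)^2 \geq c\,|h|^2 |A|^4 - C |A|^5 |\hat A|$ on $\supp(f)$. The paper shifts the tensor, setting $B = h - \Lambda v\, g$ with eigenvalues $\mu_i = \lambda_i - \Lambda v$, so that on $\supp(f)$ one has $\mu_1 \leq -\varepsilon w$ unconditionally; it then bounds $\mu_n^2 \mu_1^2 (\mu_n-\mu_1)^2$ from below and transfers the estimate from $B$ back to $h$ at the cost of $|A|^5|\hat A|$ error terms. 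You instead work with $h$ directly and split $\supp(f)$ into two regimes: where $-\lambda_1 \gtrsim |H|$, in which case a single term $\lambda_1^2 \lambda_k^2 (\lambda_1 - \lambda_k)^2$ with $\lambda_k \geq H_1/n$ already gives $\gtrsim |H|^6$, and where $v \gtrsim |H|$, in which case $|h|^2 \lesssim |A||\hat A|$ and the $|A|^5|\hat A|$ error alone dominates $|h|^2 |A|^4$. Both routes reach the same pointwise inequality; yours avoids introducing $B$ and the ensuing error bookkeeping, at the price of a slightly cruder dichotomy (the paper also ends up with an implicit case split, on the sign of the linear lower bound for $\mu_n - \mu_1$, so in complexity the two are comparable). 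One small imprecision: the dependence of $C$ on $\Lambda$ does not come solely through the threshold $\delta$ for $-\lambda_1$ — with your normalisation $\delta$ can be taken to depend only on $n, \varepsilon_0, \varepsilon$, and it is the companion threshold $\delta'$ for $v$ that carries the $\Lambda$-dependence. This does not affect the validity of the argument.
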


\begin{proof}
For a symmetric matrix $B$ with eigenvalues $\mu_i$ there holds
\begin{align*}
|B|^2 \tr (B^4) -  \tr(B^3)^2 &= \frac{1}{2} \sum_{i,j}( \mu_i^2  \mu_j^4 -  \mu_i^3 \mu_j^3 )+\frac{1}{2} \sum_{i,j}( \mu_j^2  \mu_i^4 -  \mu_i^3 \mu_j^3 )\\
&=\frac{1}{2}\sum_{i,j} \mu_i^2 \mu_j^2 (\mu_i^2 + \mu_j^2 - 2 \mu_i \mu_j)\\
&= \frac{1}{2}\sum_{i,j}\mu_i^2 \mu_j^2(\mu_i - \mu_j)^2.
\end{align*}
Observe that the right-hand side vanishes if and only if $B$ is the second fundamental form of a codimension-one cylinder. Let us define 
\[B(x,t) = h(x,t) - \Lambda v(x,t) g(x,t),\]
which has as eigenvalues $\mu_i = \lambda_i - \Lambda v$. In particular, the computation above shows that 
\begin{align*}
|B|^2 \tr (B^4) -  \tr(B^3)^2 &\geq  \mu_n^2 \mu_1^2 (\mu_n-\mu_1)^2\\
		&=\lambda_n^2 \mu_1^2 (\mu_n - \mu_1)^2 - 2 \Lambda \lambda_n  \mu_1^2 (\mu_n - \mu_1)^2 v + \Lambda^2 \mu_1^2 (\mu_n - \mu_1)^2 v^2\\
		&\geq \frac{1}{C} |h|^2 \mu_1^2 (\mu_n - \mu_1)^2 - C  |A|^5 |\hat A|
\end{align*}
where $C = C(n,\Lambda)$. At any point where $f(x,t) >0$ we have 
\[\lambda_1(x,t) < - \varepsilon w(x,t) + \Lambda v(x,t),\]
which is to say that $\mu_1(x,t) \leq - \varepsilon w(x,t)$.  Furthermore, since
\[0< H_1(x,t) = \lambda_1(x,t) + \dots + \lambda_n(x,t) \leq  \lambda_1(x,t) + (n-1)\lambda_n(x,t)\]
there holds 
\begin{align*}
\mu_n(x,t) - \mu_1(x,t) &= \lambda_n(x,t) - \lambda_1(x,t) \\
&\geq - \bigg(1+\frac{1}{n-1}\bigg) \lambda_1(x,t)\\
&\geq \frac{n}{n-1} \varepsilon w(x,t) - \frac{n}{n-1} \Lambda v(x,t).
\end{align*}
If the right-hand side is nonnegative then we can square both sides to get an estimate of the form 
\begin{align*}
(\mu_n(x,t) - \mu_1(x,t))^2 & \geq \frac{\varepsilon^2}{C} w(x,t)^2 - C\varepsilon \Lambda |A||\hat A|
\end{align*}
where $C = C(n)$. On the other hand if 
\[\frac{n}{n-1} \varepsilon w(x,t) - \frac{n}{n-1} \Lambda v(x,t)< 0 \]
then trivially there holds
\[(\mu_n(x,t) - \mu_1(x,t))^2 \geq 0 \geq \frac{n^2}{(n-1)^2} \varepsilon^2 w(x,t)^2 - \frac{n^2}{(n-1)^2} \Lambda^2 v(x,t)^2,\]
so in either case we can bound
\begin{align*}
(\mu_n(x,t) - \mu_1(x,t))^2 & \geq \frac{\varepsilon^2}{C} w(x,t)^2 - C |A||\hat A|
\end{align*}
with $C = C(n,\varepsilon, \Lambda)$. 

Putting these estimates together we find on the support of $f$ there holds
\begin{align*}
|B|^2 \tr (B^4) -  \tr(B^3)^2 &\geq  \mu_n^2 \mu_1^2 (\mu_n-\mu_1)^2\\
		&\geq \frac{\varepsilon^4}{C} |h|^2  w^4  - C |A|^5 |\hat A|,
\end{align*}
and since 
\[|B|^2 \tr (B^4) -  \tr(B^3)^2 \leq |h|^2 \tr(h^4) - \tr(h^3)^2 + C |A|^5 |\hat A|\]
we finally get 
\[|h|^2 \tr(h^4) - \tr(h^3)^2 \geq C^{-1}  |h|^2 |A|^4 - C | A|^5||\hat A| \]
where $C = C(n, \varepsilon_0, \varepsilon, \Lambda)$. 

Combining this inequality with the result of the last lemma we find on the support of $f$ there holds
\begin{align*}
|h|^2 |A|^4  &\leq C(|h|^2 \tr(h^4) - \tr(h^3)^2 )+ C| A|^5||\hat A|\\
& \leq C|E|^2 + C|A|^5 |\hat A|.
\end{align*}
Let $u$ be a nonnegative Lipschitz function supported in $\supp(f)$. Then we can multiply this inequality by $|A|^{-4} u^2$ and integrate over $M$ to get 
\begin{align*}
\int_M & |h|^2 u^2 \,d\mu_t \\
&\leq C \int_M |A|^{-4} u^2  |E|^2  + |A| |\hat A| u^2 \, d\mu_t\\
& = C \int_M |A|^{-4} u^2 E_{klij\alpha} (\nabla _k \nabla_ l A _{ij\alpha } +\nabla_l\nabla_k  A_{ji\alpha}  -\nabla _i \nabla_j A_{kl \alpha } - \nabla_j\nabla_i A_{lk\alpha}) \, d\mu_t\\
& + C \int_M |A| |\hat A| u^2 \,d\mu_t
\end{align*}
We are going to estimate each of the four Hessian terms on the right. Since each of these is handled in the same way, we only give the argument for the first one. Defining
\[T_k :=  |A|^{-4} u^2 E_{ijkl\alpha} \nabla_ l A _{ij\alpha },\]
we can write 
\begin{align*}
|A|^{-4} u^2 E_{ijkl\alpha} \nabla _k \nabla_ l A _{ij\alpha } & = \nabla_k T_k  + 4 |A|^{-5}   u^2 E_{ijkl\alpha} \nabla_ l A _{ij\alpha } \nabla_k |A| \\
&-2 |A|^{-4} u E_{ijkl\alpha}\nabla_ l A _{ij\alpha } \nabla_k u  -  |A|^{-4} u^2 \nabla_k E_{ijkl\alpha} \nabla_ l A _{ij\alpha }.
\end{align*}
The divergence term vanishes upon integration, and there is a purely dimensional constant $C$ such that 
\[|E| \leq C|A|^3 , \qquad |\nabla E| \leq C |A|^2 |\nabla A|, \qquad |\nabla |A|| \leq C |\nabla A|,\]
so making $C$ a bit larger, we have 
\begin{align*}
\int_M |A|^{-4} u^2   E_{ijkl\alpha}\nabla _k \nabla_ l A _{ij\alpha }  \, d\mu_t &\leq C \int_M u^2 |A|^{-5} |A|^3 |\nabla A|^2 \, d\mu_t\\
& + C \int_M u |\nabla u| |A|^{-4} |A|^3 |\nabla A| \,d \mu_t\\
& + C\int_M u^2 |A|^{-4} |A|^2 |\nabla A|^2 \,d\mu_t.
\end{align*}
Estimating the remaining Hessian terms in the same way and substituting back in we arrive at 
\begin{align*}
\int_M |h|^2 u^2  \, d\mu_t & \leq C \int_M u^2  \frac{|\nabla A|^2}{|A|^2} \, d\mu_t +C\int_M u |\nabla u| \frac{|\nabla A|}{|A|} \,d \mu_t + C \int_M  |A| |\hat A| u^2\,d\mu_t.
\end{align*}
\end{proof}

\section{Stampacchia iteration}

\label{sec:Stamp}

In this section we establish the convexity estimate by proving an a priori supremum estimate for the function 
\[f_\sigma := \frac{f}{|H|^{1-\sigma}}\]
where $\sigma \in (0,1)$ is chosen small depending on $n$ and $M_0$. Recall from Lemma \ref{lem:f_evol} that at each point in $Q$ there holds 
\begin{align*}
(\partial_t - \Delta )f &\leq |h|^2 f + C(1+ \Lambda) |A|^2 |\hat A| - ( f + \varepsilon w )|\nabla \nu_1|^2 \\
&  - \bigg( \frac{\varepsilon \delta_0}{2}- C\Lambda  \frac{|\hat A|}{H_1} \bigg) \frac{|\nabla A|^2}{H_1} - \bigg(2\Lambda - \frac{C}{\varepsilon \delta_0}\bigg)\frac{|\nabla \hat A|^2}{H_1},
\end{align*}
where $C = C(n)$. Let us fix 
\[\Lambda = \frac{C}{2\varepsilon \delta_0}\]
so that the last term vanishes. Then using 
\begin{align*}
(\partial_t - \Delta) |H|^{1-\sigma} = (1-\sigma) |h|^2 H_1^{1-\sigma} - (1-\sigma)H_1^{1-\sigma}|\nabla \nu_1|^2 + \sigma (1-\sigma) H_1^{-\sigma - 1} |\nabla H_1|^2
\end{align*}
we compute that
\begin{align*}
(\partial_t - \Delta) f_\sigma &\leq \sigma |h|^2 f_\sigma + C(1+ \Lambda) |A|^2 \frac{|\hat A|}{H_1^{1-\sigma}}  - \bigg(\sigma f_\sigma + \varepsilon \frac{w}{H_1^{1-\sigma} }\bigg)|\nabla \nu_1|^2 \\
&  - \bigg( \frac{\varepsilon \delta_0}{2}- C\Lambda  \frac{|\hat A|}{H_1} \bigg) H^\sigma \frac{|\nabla A|^2}{H_1^2}   - \sigma(1-\sigma) f_\sigma \frac{ |\nabla H_1|^2}{H_1^2}\\
& + 2(1-\sigma) \bigg\langle \nabla f_\sigma, \frac{\nabla H_1}{H_1} \bigg\rangle.
\end{align*}
Hence at points in $Q \cap \supp(f_\sigma)$ we have 
\begin{align}
\label{eq:f_sigma_evol}
(\partial_t - \Delta) f_\sigma &\leq \sigma |h|^2  f_\sigma+C(1+ \Lambda) |A|^2 \frac{|\hat A|}{H_1^{1-\sigma}}- \bigg( \frac{\varepsilon \delta_0}{2}- C\Lambda  \frac{|\hat A|}{H_1} \bigg) H_1^\sigma\frac{|\nabla A|^2}{H_1^{2}} \notag\\
& + 2(1-\sigma) \bigg\langle \nabla  f_\sigma, \frac{\nabla H_1}{H_1} \bigg\rangle,
\end{align}
where $C=C(n)$. 

All of the computations until now were for a quadratically pinched solution with 
\[c \leq \frac{4}{3n} - \varepsilon_0.\]
From here on we assume $n\geq 5$ and the more restrictive condition $c \leq c_n - \varepsilon_0$
where 
\[c_n : =\begin{cases} 
      \frac{3(n+1)}{2n(n+2)} & n = 5, 6, 7, \\
      \frac{4}{3n} & n \geq 8.
   \end{cases}
\]
This is the range of pinching constants for which Naff's codimension estimate is valid.
\begin{theorem}[\cite{Naff2019}]
\label{thm:codim}
Let $F:M\times[0,T) \to \mathbb{R}^{n+k}$, $n \geq 5$, be a quadratically pinched mean curvature flow with $c \leq c_n - \varepsilon_0$. Then there is a constant $\eta = \eta(n,\varepsilon_0)$ in $(0,1)$ such that 
\[\max_{M_t} \frac{|\hat A|^2}{|H|^{2-2\eta}} \leq \max_{M_0} \frac{|\hat A|^2}{|H|^{2-2\eta}}\]
for each $t \in [0,T)$. 
\end{theorem}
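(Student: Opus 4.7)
The strategy is to apply a parabolic maximum principle to the ratio $f_\eta := |\hat A|^2/|H|^{2-2\eta}$ directly. The first step is to derive an evolution inequality for $f_\eta$. Using the quotient formula for the heat operator together with Naff's evolution equation for $|\hat A|^2$ (recalled in Lemma \ref{lem:v_evol}) and the computation $(\partial_t - \Delta)|H|^{2-2\eta} = 2(1-\eta)|H|^{-2\eta}\bigl(|h|^2 |H|^2 - |\nabla H|^2\bigr) + \eta(1-\eta)|H|^{-2\eta-2}|\nabla |H|^2|^2$, one obtains a relation schematically of the form
\begin{align*}
(\partial_t - \Delta) f_\eta \leq \frac{2\bigl(|\langle \hat A, \hat A\rangle|^2 + |R^\perp|^2\bigr)}{|H|^{2-2\eta}} - 2(1-\eta)|h|^2 f_\eta - \frac{2|\nabla \hat A|^2}{|H|^{2-2\eta}} + G,
\end{align*}
where $G$ collects lower-order gradient terms. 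The crucial cross-term from the quotient rule can be rearranged into $2(1-\eta)\langle \nabla f_\eta,\, \nabla \log |H|\rangle$, which vanishes at interior spatial extrema and hence does not obstruct the maximum principle.

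The heart of the argument is a sharp algebraic lemma: under the quadratic pinching $|A|^2 \leq c_n |H|^2$, there is a constant $\theta = \theta(n, \varepsilon_0) > 0$ for which
\begin{align*}
2|\langle \hat A, \hat A\rangle|^2 + 2|R^\perp|^2 \leq \bigl(2 - \theta\bigr)|h|^2 |\hat A|^2.
\end{align*}
To establish this, I would decompose $A = h\nu_1 + \hat A$, use the identity $R^\perp_{ij\alpha\beta} = A_{ip\alpha}A_{jp\beta} - A_{ip\beta}A_{jp\alpha}$ to rewrite $|R^\perp|^2$ as a combination of $|h|^2|\hat A|^2$-type and $|\hat A|^4$-type terms, and then bound $|\langle \hat A, \hat A\rangle|^2 \leq |\hat A|^4$ together with a spectral estimate controlling $|\hat A|^2$ by $(1 - nc_n^{-1})|h|^2$ or similar, exploiting the trace-free structure (the traceless part $\circo h$ is orthogonal to the identity, and pinching forces $|\hat A|^2$ to be small relative to $|h|^2$). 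This is exactly where the specific numerical value of $c_n$ intervenes: the sharp constant one can extract in dimensions $n \in \{5,6,7\}$ is weaker than in dimensions $n \geq 8$, producing the two-case formula for $c_n$.

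Given the algebraic lemma, the pinching can be applied with $\eta$ chosen small enough that $\theta - 2\eta > 0$, and the evolution inequality becomes
\begin{align*}
(\partial_t - \Delta) f_\eta \leq -(\theta - 2\eta)|h|^2 f_\eta - \frac{2|\nabla \hat A|^2}{|H|^{2-2\eta}} + 2(1-\eta)\bigl\langle \nabla f_\eta,\, \nabla \log|H|\bigr\rangle.
\end{align*}
At a spatial maximum of $f_\eta(\cdot, t)$, the gradient term vanishes, the Laplacian is nonpositive, and the first reaction term is nonpositive. Hence the Hamilton-style scalar maximum principle yields that $\max_M f_\eta(\cdot, t)$ is nonincreasing in time, which is the desired conclusion.

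The main obstacle is the algebraic estimate in the second step. The coefficients appearing in $|\langle \hat A, \hat A\rangle|^2$ and $|R^\perp|^2$ depend intricately on how $\hat A$ is distributed across the normal bundle and among its tangential indices, and obtaining the sharp threshold $c_n$ requires a careful case analysis over extremal tensor configurations — this is what forces the dimensional dichotomy in the definition of $c_n$.
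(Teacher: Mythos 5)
This statement is cited verbatim from Naff \cite{Naff2019}; the paper does not give its own proof, so there is no internal argument to compare against. Your outline does capture the correct high-level strategy of Naff's proof — apply the scalar maximum principle to $|\hat A|^2/|H|^{2-2\eta}$ using Naff's evolution equation for $|\hat A|^2$ and show the reaction terms have a favorable sign under pinching. However, both central steps are substantially more delicate than your sketch indicates, and as written the argument has gaps.

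For the algebraic estimate, the reaction terms in the evolution equation recalled before Lemma \ref{lem:v_evol} are
\[
2|\langle\hat A,\hat A\rangle|^2 + 2\sum_{i,j}\Big|\sum_k(\hat A_{ik}\otimes\hat A_{jk}-\hat A_{jk}\otimes\hat A_{ik})\Big|^2 + 2\sum_\alpha|R^\perp_{ij1\alpha}|^2,
\]
not $2|\langle\hat A,\hat A\rangle|^2 + 2|R^\perp|^2$. The first two are quartic in $\hat A$, so the pinching bound $|\hat A|^2\lesssim|h|^2$ does control them. But the third term is genuinely quadratic in $h$: since $R^\perp_{ij1\alpha}=h_{ip}\hat A_{jp\alpha}-\hat A_{ip\alpha}h_{jp}$, diagonalizing $h$ gives $\sum_\alpha|R^\perp_{ij1\alpha}|^2=\sum_{\alpha,i,j}(\lambda_i-\lambda_j)^2(\hat A_\alpha)_{ij}^2$, which is bounded only by $(\lambda_n-\lambda_1)^2|\hat A|^2$. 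The crude bound $(\lambda_n-\lambda_1)^2\le 2|h|^2$ gives $2\cdot 2|h|^2|\hat A|^2=4|h|^2|\hat A|^2$, already larger than the target $2(1-\eta)|h|^2|\hat A|^2$. To get below that threshold you must use the variance pinching of the eigenvalues, $|\circo h|^2\le (c-\tfrac1n)|H|^2$, so that $(\lambda_n-\lambda_1)^2\le 2|\circo h|^2$ is a small multiple of $|h|^2$; the bound $|\hat A|^2\lesssim|h|^2$ alone is insufficient. Even then, summing all three reaction contributions and verifying a strict margin for $\eta>0$ is exactly where the $c_n$ thresholds bite, and this is not handled by the Cauchy–Schwarz you outline.

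For the gradient terms, you claim the cross-term from the quotient rule vanishes at interior spatial maxima and fold everything else into a vague $G$. Only the quotient-rule cross term $\langle\nabla f_\eta,\nabla\log|H|\rangle$ vanishes at a spatial maximum. Naff's equation for $|\hat A|^2$ itself carries the term $4\sum_{i,j,k}(\langle\nabla_k\circo h_{ij},\nu_1\rangle-H_1^{-1}\circo h_{ij}\nabla_k H_1)\langle\hat A_{ij},\nabla_k\nu_1\rangle$, which has no sign, and the evolution of $|H|^{2-2\eta}$ contributes a positively-signed $2(1-\eta)|H|^{-2\eta}|\nabla H|^2$ into the quotient. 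Neither of these vanishes at a spatial maximum of $f_\eta$. Showing that these are dominated by $-2|\nabla\hat A|^2/|H|^{2-2\eta}$ requires using the first-order condition $\nabla f_\eta=0$ to relate $\nabla|\hat A|^2$ and $\nabla|H|$, together with a careful split of $\nabla\nu_1$ into tangential and normal parts; it cannot be dismissed as lower order. These two steps are precisely the content of Naff's proof, so the outline is in the right direction but does not yet constitute a proof.
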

Hence if we set 
\[L := \max_{M_0} |H|\]
then the inequality
\[\frac{|\hat A|}{|H|} \leq C(n)L^\eta |H|^{-\eta}\]
holds on $M_t$ for every $t \in [0,T)$. Inserting this estimate into \eqref{eq:f_sigma_evol} we find
\begin{align}
\label{eq:f_sigma_est_1}
(\partial_t - \Delta) f_\sigma &\leq \sigma |h|^2  f_\sigma+C(1+ \Lambda)L^\eta |A|^2 H_1^{\sigma-\eta}- \bigg( \frac{\varepsilon \delta_0}{2}- C\Lambda L^\eta H_1^{-\eta} \bigg) H_1^\sigma\frac{|\nabla A|^2}{H_1^{2}}\notag\\
& + 2(1-\sigma) \bigg\langle \nabla  f_\sigma, \frac{\nabla H_1}{H_1} \bigg\rangle
\end{align}
on $Q \cap \supp(f_\sigma)$, where $C=C(n)$. 

\subsection{$L^p$-estimates}
For each $k >0$ let us define 
\[ f_{\sigma,k}(x,t) := \max\{ f_\sigma(x,t) -k,0\}.\]
Using the Poincar\'{e} inequality we now establish an $L^p$-estimate for $f_{\sigma,k}$. In the codimension one case similar estimates have appeared in \cite{Huisken84} and \cite{Huisk-Sin99a}.

\begin{proposition}
\label{prop:Lp}
There are positive constants $p_0$ and $\ell_0$ depending on $n$, $\varepsilon_0$, $\eta$, $\varepsilon$ and $\Lambda$, and a positive constant $k_0 = k_0(n, \varepsilon_0, \eta, \varepsilon, \Lambda, L)$, with the following property. For every 
\[p \geq p_0, \qquad \sigma \leq \ell_0 p^{-\frac{1}{2}}, \qquad k \geq k_0,\]
we have 
\[\sup_{t \in [0,T)} \bigg( \int_M f_{\sigma,k}^p \,d\mu_t \bigg)\leq C,\]
where $C = C(n, \varepsilon_0,\eta,\varepsilon, \Lambda, L, \mu_0(M), T, k, \sigma, p)$.  
\end{proposition}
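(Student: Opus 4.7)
The plan is a Moser-type $L^p$ iteration. I will differentiate $\int_{M_t} f_{\sigma,k}^p \, d\mu_t$ in time. Since $-\lambda_1$ is locally semiconvex and the remaining summands of $f_\sigma$ are smooth where $|H|>0$, the function $f_\sigma$ is locally semiconvex on $\supp f_\sigma$, so Lemma \ref{lem:alex} applied with test function $\varphi = p f_{\sigma,k}^{p-1}$ will give
\begin{equation*}
\int_{M_t} p f_{\sigma,k}^{p-1}\, \Delta f_\sigma \, d\mu_t \leq - p(p-1) \int_{M_t} f_{\sigma,k}^{p-2} |\nabla f_{\sigma,k}|^2 \, d\mu_t,
\end{equation*}
using $\nabla f_\sigma = \nabla f_{\sigma,k}$ on $\{f_\sigma > k\}$. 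Combined with the inequality \eqref{eq:f_sigma_est_1} and the area evolution $\partial_t d\mu_t = -|H|^2 d\mu_t$ (whose sign helps), this produces a good Dirichlet term and a good negative gradient contribution $-p(\varepsilon \delta_0/4) \int f_{\sigma,k}^{p-1} H_1^\sigma |\nabla A|^2 H_1^{-2} \, d\mu_t$, competing against the cubic reaction $\sigma p \int f_{\sigma,k}^{p-1} |h|^2 f_\sigma \, d\mu_t$, a lower-order term involving $|A|^2|\hat A|/H_1^{1-\sigma}$, and a drift term $2(1-\sigma) p \int f_{\sigma,k}^{p-1} \langle \nabla f_{\sigma,k}, \nabla \log H_1\rangle \, d\mu_t$.

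The cubic reaction is the main difficulty and will be controlled using the Poincar\'e inequality from Section \ref{sec:Poinc}, applied with $u = f_{\sigma,k}^{p/2}$ (Lipschitz and supported in $\supp f$). After writing $f_\sigma = f_{\sigma,k} + k$ on $\supp f_{\sigma,k}$ and using the pinching $|A|^2 \sim H_1^2$, this will yield
\begin{equation*}
\int |h|^2 f_{\sigma,k}^p \, d\mu_t \leq C \int f_{\sigma,k}^p \frac{|\nabla A|^2}{H_1^2} d\mu_t + C p \int f_{\sigma,k}^{p-1} |\nabla f_{\sigma,k}| \frac{|\nabla A|}{H_1} d\mu_t + C \int |A||\hat A| f_{\sigma,k}^p d\mu_t,
\end{equation*}
with an analogous bound for $k \int |h|^2 f_{\sigma,k}^{p-1} d\mu_t$. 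I will apply Young's inequality to the middle term, split as $f_{\sigma,k}^{(p-2)/2}|\nabla f_{\sigma,k}|$ times $f_{\sigma,k}^{p/2} |\nabla A|/H_1$, so that it contributes both to the Dirichlet term and to $\int f_{\sigma,k}^p |\nabla A|^2/H_1^2$. After multiplying by $\sigma p$ and using the pointwise bound $f_{\sigma,k} \leq C H_1^\sigma$ --- which follows from $f \leq C(1+\Lambda) H_1$, itself a consequence of pinching and Naff's Theorem \ref{thm:codim} --- both contributions can be absorbed into the Dirichlet term and the good gradient term, provided $\sigma^2 p \leq \ell_0$ for a small enough $\ell_0$. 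This is the origin of the prescribed scaling $\sigma \leq \ell_0 p^{-1/2}$.

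The drift term will be treated by integrating by parts, rewriting it as $2(1-\sigma) \int \langle \nabla f_{\sigma,k}^p, \nabla \log H_1 \rangle \, d\mu_t$, and Young's inequality then places half into the Dirichlet term and the remainder into a constant multiple of $\int f_{\sigma,k}^p |\nabla A|^2 H_1^{-2} \, d\mu_t$. The latter, together with the first Poincar\'e term, will be absorbed into the good gradient term once $k \geq k_0$ is chosen large enough that $|\hat A|/H_1 \leq C L^\eta H_1^{-\eta}$ is small on $\supp f_{\sigma,k}$ (via Theorem \ref{thm:codim}) and that $H_1^\sigma \geq k/C$ there. The residual lower-order terms $|A||\hat A|$ and $|A|^2|\hat A|/H_1^{1-\sigma}$ will be bounded via Naff's estimate by quantities like $CL^\eta \int H_1^{2-\eta} f_{\sigma,k}^p d\mu_t$ and then, using $f_{\sigma,k} \leq C H_1^\sigma$ and H\"older, dominated by $C + C\int f_{\sigma,k}^p d\mu_t$. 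Gr\"onwall's inequality closes the argument.

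The main technical obstacle lies in the bookkeeping of absorptions in the second step: the Poincar\'e inequality exchanges $|h|^2$ for an unweighted $|\nabla A|^2/H_1^2$, while the good gradient term in \eqref{eq:f_sigma_est_1} carries the weight $H_1^\sigma$, which is not uniformly bounded below if $H_1$ is moderate. Inserting the missing $H_1^\sigma$ by means of the pointwise inequality $f_{\sigma,k} \leq C H_1^\sigma$ at the cost of one factor of $f_{\sigma,k}$ is precisely what forces $\sigma$ to scale like $p^{-1/2}$ rather than like a fixed constant.
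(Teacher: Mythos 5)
Your proposal follows the same architecture as the paper's proof: differentiate $\int f_{\sigma,k}^p\,d\mu_t$ in time, integrate by parts using Lemma \ref{lem:alex}, feed in \eqref{eq:f_sigma_est_1}, handle the cubic reaction with the Poincar\'e inequality applied to $u = f_{\sigma,k}^{p/2}$, and use Naff's codimension estimate plus the constraint $\sigma \lesssim p^{-1/2}$ to make all the gradient terms absorb. All of that is correct and is exactly what the paper does. The accounting of the Dirichlet term versus the Poincar\'e gradient contributions, and the treatment of the drift term via Young's inequality, are also right.

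The gap is in your treatment of the lower-order reaction terms. You claim that $C(1+\Lambda)\,p\int |A|^2 |\hat A| H_1^{\sigma-1} f_{\sigma,k}^{p-1}\,d\mu_t$ and the Poincar\'e term $\sigma p\, C_5 \int |A||\hat A| f_{\sigma,k}^p\,d\mu_t$ can be bounded ``via $f_{\sigma,k}\le CH_1^\sigma$ and H\"older'' by $C + C\int f_{\sigma,k}^p\,d\mu_t$. This does not work: after Naff both integrands behave like $H_1^{2-\eta} f_{\sigma,k}^p$ (up to powers of $H_1^\sigma$), and $H_1^{2-\eta}\to\infty$ on $\supp f_{\sigma,k}$ as $t\to T$, while $f_{\sigma,k}\le CH_1^\sigma$ with $\sigma$ tiny gives no compensating decay. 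There is no H\"older split that makes $\int H_1^{2-\eta}f_{\sigma,k}^p\le C+C\int f_{\sigma,k}^p$, and the area-decay term $-\int |H|^2 f_{\sigma,k}^p$ has coefficient one, so it cannot swallow a coefficient of order $p$. These terms must instead be kept in $|h|^2$-form so that the Poincar\'e inequality can be brought to bear: for the evolution term one uses $|A|^2\le C|h|^2$ together with the Young split $H_1^{-\eta/2}\le \sigma + \sigma^{-4/\eta}H_1^{-2}$ (valid since $\sigma\le\eta/2$ forces $H_1^{\sigma-\eta}\le H_1^{-\eta/2}$ on $\supp f_{\sigma,k}$ once $H_1\ge 1$), producing a contribution $C\sigma |h|^2$ which merges with the main reaction and a genuine constant $C\sigma^{-4/\eta}$; for the Poincar\'e-generated $|A||\hat A|$ term one uses $|A||\hat A|\le C|h|^2 L^\eta |H|^{-\eta}$ together with $|H|\ge k/C_0$ on $\supp f_{\sigma,k}$ so that, after enlarging $k_0$, it is absorbed by one half of the left-hand side of the Poincar\'e inequality. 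Without this structure the argument does not close, and the origin of the $\sigma^{-4/\eta}$ factor in the final Gr\"onwall constant --- which the proposition's statement requires --- is not visible in your sketch.

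A smaller, cosmetic point: describing the drift handling as ``integrating by parts'' and then ``Young's inequality'' is a conflation; what you actually do is pull the factor $p f_{\sigma,k}^{p-1}$ into $\nabla f_{\sigma,k}^p$ by the chain rule and apply Young. Also, $f\le C(1+\Lambda)H_1$ already follows from the pinching alone (one has $-\lambda_1\le |A|\le CH_1$ and $v\le |A|^2/H_1\le CH_1$), so Naff's theorem is not needed for this particular pointwise bound; you do of course need it elsewhere.
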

\begin{proof}
Suppose for now that $p_0 \geq 4$ and $\ell_0 \leq \eta$. Then the condition $\sigma \leq \ell_0 p^{-\frac{1}{2}}$ ensures that $\sigma \leq \eta /2$. On $\supp(f_{\sigma,k})$ we have 
\[k < \frac{f}{H_1} H_1^\sigma \leq C_0(n,\Lambda) H_1^\sigma,\]
so if we take $k_0 \geq C_0$ and impose $k \geq k_0$ then on $\supp(f_{\sigma,k})$ there holds
\[H_1 \geq (k/C_0)^\frac{1}{\sigma} \geq \max\{k/C_0,1\}.\]
Substituting this into \eqref{eq:f_sigma_est_1} we find
\begin{align*}
(\partial_t - \Delta) f_\sigma &\leq \sigma |h|^2  f_\sigma+C(1+ \Lambda) L^\eta |A|^2 H_1^{-\frac{\eta}{2}}- \bigg( \frac{\varepsilon \delta_0}{2}- C_1 k^{-\eta} \bigg) H_1^\sigma\frac{|\nabla A|^2}{H_1^{2}}\notag\\
& + 2(1-\sigma) \bigg\langle \nabla  f_\sigma, \frac{\nabla H_1}{H_1} \bigg\rangle
\end{align*}
on $Q\cap \supp(f_{\sigma, k})$, where $C = C(n)$ and $C_1 = C_1(n,\eta,\Lambda,L)$. Choosing $k_0$ a bit larger so that 
\[k_0 \geq \max\bigg\{1,C_0, \bigg( \frac{4C_1}{\varepsilon \delta_0}\bigg)^{1/\eta} \bigg\}\]
and using $f/H_1 \leq C_0$, we find on $Q \cap \supp(f_{\sigma,k})$,
 \begin{align*}
(\partial_t - \Delta) f_\sigma &\leq \sigma |h|^2  f_\sigma+C(1+ \Lambda)L^\eta |A|^2 H_1^{-\frac{\eta}{2}}-  \frac{\varepsilon \delta_0}{4 C_0} f_\sigma \frac{|\nabla A|^2}{H_1^{2}}\notag\\
& + 2(1-\sigma) \bigg\langle \nabla  f_\sigma, \frac{\nabla H_1}{H_1} \bigg\rangle.
\end{align*}
By Young's inequality we have
\[2(1-\sigma) \bigg\langle \nabla  f_\sigma, \frac{\nabla H_1}{H_1} \bigg\rangle \leq C_2 \frac{|\nabla f_\sigma|^2}{f_\sigma} + \frac{\varepsilon \delta_0}{8 C_0} f_\sigma \frac{|\nabla A|^2}{H_1^2}\]
on $\supp(f_\sigma)$, where $C_2 = C_2(n,\varepsilon_0,\varepsilon, C_0)$. Hence on $Q \cap \supp(f_{\sigma,k})$, 
 \begin{align*}
(\partial_t - \Delta) f_\sigma &\leq \sigma |h|^2  f_\sigma+C(1+ \Lambda) L^\eta |A|^2 H_1^{-\frac{\eta}{2}}-  \frac{\varepsilon \delta_0}{8 C_0} f_\sigma \frac{|\nabla A|^2}{H_1^{2}} + C_2 \frac{|\nabla f_\sigma|^2}{f_\sigma}.
\end{align*}

Applying the pinching we can bound
\[C(1+ \Lambda) L^\eta |A|^2 H_1^{-\eta/2} \leq C_3(n,\eta,\Lambda,L) |h|^2 H_1^{-\eta/2},\]
and by Young's inequality 
\[H_1^{-\eta/2} \leq \frac{4 - \eta}{4}s^{4/(4-\eta)} + \frac{\eta}{4} \frac{1}{s^{4/\eta} } \frac{1}{H_1^2} \leq s^{4/(4-\eta)} + \frac{1}{s^{4/\eta} } \frac{1}{H_1^2} \]
for every positive $s$. Setting $s = \sigma^{(4-\eta)/4}$ gives 
\[H_1^{-\eta/2} \leq \sigma + \frac{1}{\sigma^{(4-\eta)/\eta}} \frac{1}{H_1^2} \leq \sigma + \sigma^{-4/\eta} H_1^{-2},\]
so using the pinching we get
\[C(1+ \Lambda) L^\eta|A|^2 H_1^{-\frac{\eta}{2}} \leq C_3 \sigma |h|^2 + C_4 \sigma^{-4/\eta}\]
for some $C_4 = C_4(n,\eta,\Lambda,L)$. Substituting back in, we have 
 \begin{align*}
(\partial_t - \Delta) f_\sigma &\leq \sigma |h|^2  f_\sigma + C_3\sigma |h|^2-  c_0 f_\sigma \frac{|\nabla A|^2}{H_1^{2}} + C_2 \frac{|\nabla f_\sigma|^2}{f_\sigma} + C_4 \sigma^{-4/\eta}
\end{align*}
on $Q \cap \supp (f_{\sigma,k})$,
where 
\[c_0 := \frac{\varepsilon \delta_0}{8 C_0}.\] 

If $\varphi$ is any nonnegative Lipschitz function supported in $\supp(f_{\sigma,k})$, then on almost every timeslice we can multiply the last inequality by $\varphi$ and integrate to get 
 \begin{align*}
\int_M \partial_t f_\sigma \cdot \varphi \,d\mu_t &\leq \int_M \Delta f_\sigma \cdot \varphi\,d\mu_t + \sigma \int_M |h|^2 f_\sigma \varphi \,d\mu_t + C_3 \sigma \int_M |h|^2 \varphi\, d\mu_t \\
&- c_0 \int_M f_\sigma \varphi \frac{|\nabla A|^2}{|H|^2} \,d\mu_t + C_2 \int_M \varphi \frac{|\nabla f_\sigma|^2}{f_\sigma}\,d\mu_t + C_4 \sigma^{-4/\eta}\int_M \varphi\,d\mu_t.
\end{align*}
Since $f_\sigma$ is a locally semiconvex function we can use Lemma \ref{lem:alex} to integrate by parts, and so obtain
 \begin{align*}
\int_M \partial_t f_\sigma \cdot \varphi \,d\mu_t & \leq -\int_M \langle \nabla f_\sigma, \nabla \varphi \rangle\,d\mu_t + \sigma \int_M |h|^2 f_\sigma \varphi \,d\mu_t + C_3 \sigma \int_M |h|^2 \varphi\, d\mu_t \\
&- c_0 \int_M f_\sigma \varphi \frac{|\nabla A|^2}{|H|^2} \,d\mu_t + C_2 \int_M \varphi \frac{|\nabla f_\sigma|^2}{f_\sigma}\,d\mu_t + C_4 \sigma^{-4/\eta}\int_M \varphi\,d\mu_t.
\end{align*}
We set $\varphi = p f_{\sigma, k}^{p-1}$ in this inequality and use 
\[\frac{d}{dt} \int_M f_{\sigma, k}^p \,d\mu_t = p \int_M \partial_t f_\sigma \cdot f_{\sigma,k}^{p-1} \,d\mu_t - \int_M |H|^2 f_{\sigma,k}^p\,d\mu_t\]
to estimate
\begin{align*}
\frac{d}{dt} &\int_M f_{\sigma,k}^p \,d\mu_t\\
 &\leq - p(p-1) \int_M f_{\sigma,k}^{p-2}|\nabla f_\sigma|^2\,d\mu_t + \sigma p\int_M |h|^2 f_\sigma f_{\sigma,k}^{p-1} \,d\mu_t + C_3 \sigma p \int_M |h|^2 f_{\sigma,k}^{p-1}\, d\mu_t \\
&- c_0 p\int_M f_\sigma f_{\sigma,k}^{p-1} \frac{|\nabla A|^2}{|H|^2} \,d\mu_t + C_2 p\int_M f_{\sigma,k}^{p-1} \frac{|\nabla f_\sigma|^2}{f_\sigma}\,d\mu_t + C_4 \sigma^{-4/\eta} p \int_M f_{\sigma,k}^{p-1} \,d\mu_t
\end{align*}
for almost every $t \in [0,T)$. Using that $f_{\sigma,k} = f_\sigma - k$ on $\supp(f_{\sigma,k})$ and rearranging slightly, this gives 
\begin{align*}
\frac{d}{dt} \int_M f_{\sigma,k}^p \,d\mu_t &\leq - (p(p-1) - C_2p)\int_M f_{\sigma,k}^{p-2}|\nabla f_\sigma|^2\,d\mu_t - c_0 p\int_M f_{\sigma,k}^{p} \frac{|\nabla A|^2}{|H|^2} \,d\mu_t \\
 & + \sigma p\int_M |h|^2 f_{\sigma,k}^{p} \,d\mu_t  + (C_3+k) \sigma p \int_M |h|^2 f_{\sigma,k}^{p-1}\, d\mu_t \\
 &+ C_4 \sigma^{-4/\eta} p \int_M f_{\sigma,k}^{p-1} \,d\mu_t.
\end{align*}

Using Young's inequality we estimate 
\begin{align*}
(C_3 +k)\sigma p\int_M |h|^2  f_{\sigma,k}^{p-1}\, d\mu_t  & \leq \sigma (p-1) \int_M |h|^2  f_{\sigma,k}^p \,d\mu_t + (C_3+k)^p \sigma \int_M |h|^2 \, d\mu_t
\end{align*}
and
\begin{align*}
C_4 \sigma^{-4/\eta} p \int_M f_{\sigma,k}^{p-1} \,d\mu_t \leq C_4 \sigma^{-4/\eta} (p-1) \int_M f_{\sigma,k}^{p} \,d\mu_t + C_4 \sigma^{-4/\eta} \mu_t(M).
\end{align*}
Inserting these inequalities we arrive at 
\begin{align}
\label{eq:Lp_step}
\frac{d}{dt} \int_M f_{\sigma,k}^p \,d\mu_t &\leq - (p(p-1) - C_2p)\int_M f_{\sigma,k}^{p-2}|\nabla f_\sigma|^2\,d\mu_t - c_0 p\int_M f_{\sigma,k}^{p} \frac{|\nabla A|^2}{|H|^2} \,d\mu_t \notag\\
 & + 2\sigma p\int_M |h|^2 f_{\sigma,k}^{p} \,d\mu_t + (C_3+k)^p \sigma \int_M |h|^2 \, d\mu_t   \notag\\
&+C_4 \sigma^{-4/\eta} p \int_M f_{\sigma,k}^{p} \,d\mu_t + C_4 \sigma^{-4/\eta} \mu_t(M).
\end{align}

Since $ f_{\sigma, k}$ is supported in $\supp(f)$, we can apply the Poincar\'{e} inequality with $u = f_{\sigma, k}^{\frac{p}{2}}$ to obtain
\begin{align*}
\int_M |h|^2 f_{\sigma,k}^p  \, d\mu_t & \leq C_5 \int_M  f_{\sigma,k}^p  \frac{|\nabla A|^2}{|H|^2} \, d\mu_t +C_5 p \int_M  f_{\sigma,k}^{p-1} |\nabla  f_\sigma| \frac{|\nabla A|}{|H|} \,d \mu_t\\
& + C_5 \int_M  |A| |\hat A| f_{\sigma,k}^p\,d\mu_t,
\end{align*} 
where the constant $C_5$ depends on $n$, $\varepsilon_0$, $\varepsilon$ and $\Lambda$. Applying Young's inequality we obtain
\begin{align*}
\int_M |h|^2  f_{\sigma,k}^p  \, d\mu_t & \leq C_5(1+p^\frac{1}{2}) \int_M  f_{\sigma,k}^p  \frac{|\nabla A|^2}{|H|^2} \, d\mu_t +C_5 p^\frac{3}{2} \int_M   f_{\sigma,k}^{p-2} |\nabla  f_\sigma|^2 \,d \mu_t\\
& + C_5 \int_M  |A| |\hat A| f_{\sigma,k}^p\,d\mu_t.
\end{align*}
Inserting the codimension estimate and quadratic pinching we get 
\begin{align*}
C_5 \int_M  |A| |\hat A| f_{\sigma,k}^p\,d\mu_t \leq C_6(n, L, C_5) \int_M |h|^2 |H|^{-\eta} f_{\sigma,k}^p\,d\mu_t, 
\end{align*}
and we know that $|H| \geq k/C_0$ on $\supp(f_{\sigma,k})$, so if we take 
\[ k_0 \geq \max\bigg\{1,C_0, \bigg( \frac{4C_1}{\varepsilon \delta_0}\bigg)^{1/\eta} , C_0(2C_6)^{1/\eta}\bigg\}\]
then 
\begin{align*}
C_5 \int_M  |A| |\hat A| f_{\sigma,k}^p\,d\mu_t \leq \frac{1}{2} \int_M |h|^2  f_{\sigma,k}^p\,d\mu_t. 
\end{align*}
In this case 
\begin{align*}
\frac{1}{2} \int_M |h|^2  f_{\sigma,k}^p  \, d\mu_t & \leq C_5(1+p^\frac{1}{2}) \int_M  f_{\sigma,k}^p  \frac{|\nabla A|^2}{|H|^2} \, d\mu_t +C_5 p^\frac{3}{2} \int_M   f_{\sigma,k}^{p-2} |\nabla  f_\sigma|^2 \,d \mu_t.
\end{align*}
Multiplying this inequality through by $4\sigma p$ and substituting back into \eqref{eq:Lp_step} gives
\begin{align*}
\frac{d}{dt} \int_M f_{\sigma,k}^{p} \,d\mu_t &\leq- (p(p-1)-C_2p - 4 C_5 \sigma p^\frac{5}{2}) \int_M  f_{\sigma, k}^{p-2} |\nabla f_\sigma|^2 \,d\mu_t \\
&- (c_0 p - 4 C_5\sigma p -4 C_5\sigma p^\frac{3}{2} ) \int_M f_{\sigma,k}^{p} \frac{|\nabla A|^2}{|H|^{2}}\,d\mu_t\\
&+ (C_3+k)^p \sigma \int_M |h|^2 \, d\mu_t  +C_4 \sigma^{-4/\eta} p \int_M f_{\sigma,k}^{p} \,d\mu_t \\
&+ C_4 \sigma^{-4/\eta} \mu_t(M).
\end{align*}

Now we insert the assumption $\sigma \leq \ell_0 p^{-\frac{1}{2}}$ and thus obtain
\begin{align*}
\frac{d}{dt} \int_M  f_{\sigma,k}^{p} \,d\mu_t &\leq- (p(p-1)-C_2p -4 C_5 \ell_0 p^2) \int_M f_{\sigma, k}^{p-2} |\nabla  f_\sigma|^2 \,d\mu_t \\
&- (c_0 p - 4 C_5\ell_0 p^\frac{1}{2} -4 C_5 \ell_0 p ) \int_M f_{\sigma,k}^{p} \frac{|\nabla A|^2}{|H|^{2}}\,d\mu_t\\
&+ (C_3+k)^p \sigma \int_M |h|^2 \, d\mu_t  +C_4 \sigma^{-4/\eta} p \int_M f_{\sigma,k}^{p} \,d\mu_t \\
&+ C_4 \sigma^{-4/\eta} \mu_t(M).
\end{align*}
Decreasing $\ell_0$ so that
\[\ell_0 \leq \min\bigg\{\eta, \frac{c_0}{8C_5}, \frac{1}{8C_5} \bigg\}\]
now gives
\begin{align*}
\frac{d}{dt} \int_M  f_{\sigma,k}^{p} \,d\mu_t &\leq- (p^2/2 -p-C_2p ) \int_M  f_{\sigma, k}^{p-2} |\nabla f_\sigma|^2 \,d\mu_t \\
&- (c_0 p /2 - 2 C_5\ell_0 p^\frac{1}{2}  ) \int_M f_{\sigma,k}^{p} \frac{|\nabla A|^2}{|H|^{2}}\,d\mu_t\\
&+ (C_3+k)^p \sigma \int_M |h|^2 \, d\mu_t  +C_4 \sigma^{-4/\eta} p \int_M f_{\sigma,k}^{p} \,d\mu_t \\
&+ C_4 \sigma^{-4/\eta} \mu_t(M).
\end{align*}
We can now take $p_0$ large depending only on $c_0$ and $C_5$ to ensure that for $p \geq p_0$ the inequality
\begin{align*}
\frac{d}{dt} \int_M  f_{\sigma,k}^{p} \,d\mu_t &\leq (C_3+k)^p \sigma \int_M |h|^2 \, d\mu_t  +C_4 \sigma^{-4/\eta} p \int_M f_{\sigma,k}^{p} \,d\mu_t \\
&+ C_4 \sigma^{-4/\eta} \mu_t(M).
\end{align*}
holds for almost every $t \in [0,T)$. 

Taking $k_0$ a bit larger depending on $n$ and $C_3$, using $k \geq k_0$ we can bound
\begin{align*}
\frac{d}{dt} \int_M  f_{\sigma,k}^{p} \,d\mu_t &\leq 2^p k^p \sigma \int_M |H|^2 \, d\mu_t  +C_4 \sigma^{-4/\eta} p \int_M f_{\sigma,k}^{p} \,d\mu_t \\
&+ C_4 \sigma^{-4/\eta} \mu_t(M).
\end{align*}
Since 
\[\frac{d}{dt} \int_M 2^p k^p \sigma \, d\mu_t = - 2^p k^p \sigma \int_M |H|^2 \,d\mu_t\]
this implies 
\begin{align*}
\frac{d}{dt} \int_M  f_{\sigma,k}^{p} + 2^p k^p \sigma \,d\mu_t &\leq  C_4 \sigma^{-4/\eta} p \int_M f_{\sigma,k}^{p} \,d\mu_t + C_4 \sigma^{-4/\eta} \mu_t(M)\\
&=  C_4 \sigma^{-4/\eta} p \int_M f_{\sigma,k}^{p} + p^{-1} \,d\mu_t .
\end{align*}
Hence the function 
\[\varphi(t):= \int_M f_{\sigma,k}^p +2^pk^p\sigma + p^{-1}  \,d\mu_t \]
satisfies 
\[\varphi'(t) \leq  C_4 \sigma^{-4/\eta} p \varphi(t)\]
for almost every $t \in [0,T)$. Since $\varphi$ is Lipschitz continuous in time it follows that 
\[\varphi(t) \leq \varphi(0) \exp( C_4 \sigma^{-4/\eta} p t).\]
In particular, $\varphi$ can be bounded from above in terms of its value at the initial time, and the constants $C_4$, $\eta$, $\sigma$, $p$ and $T$. Recall that $C_4$ depends only on $n$, $\eta$, $\Lambda$ and $L$. Also, 
\[f_{\sigma, k}^{p} \leq C_0^p |H|^{\sigma p},\]
so $\varphi(0)$ can be bounded purely in terms of $n$, $\Lambda$, $\sigma$, $p$, $L$ and $\mu_0(M)$. This completes the proof.
\end{proof}

\subsection{The supremum estimate}

Combining the $L^p$-estimates just established with the Michael-Simon Sobolev inequality \cite{Michael-Simon73} we obtain the following iteration inequality. The proof is very similar to that of Theorem 5.1 in \cite{Huisken84}, so we omit the details.

\begin{proposition}
There are positive constants $p_1 \geq p_0$ and $\ell_1 \leq \ell_0$ depending on $n$, $\varepsilon_0$, $\eta$, $\varepsilon$ and $\Lambda$, and a positive constant $k_1 \geq k_0$ depending on $n$,  $\varepsilon_0$, $\eta$, $\varepsilon$, $\Lambda$ and $L$, with the following property. Suppose $p \geq p_1$ and $\sigma \leq \ell_1 p^{-\frac{1}{2}}$ and set 
\[A(k) := \int_0^T \int_{\supp(f_{\sigma,k}(\cdot,t))} \,d\mu_t dt.\]
Then for every $h>k\geq k_1$ we have  
\begin{align*}
A(h) \leq \frac{C}{(h-k)^p} A(k)^{\gamma}.
\end{align*}
where $\gamma >1$ depends on $n$ and $C = C(n, \varepsilon_0, \eta, \varepsilon, \Lambda, L, \mu_0(M), T, \sigma, p)$. 
\end{proposition}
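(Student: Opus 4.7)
This is a standard Stampacchia iteration along the lines of Theorem 5.1 in \cite{Huisken84}. The plan has three main steps: upgrade the differential inequality underlying Proposition 4.1 so that its right-hand side is controlled by $C \cdot A(k)$; apply the Michael--Simon Sobolev inequality together with parabolic interpolation to boost integrability; and extract $A(k)^{\gamma}$ with $\gamma > 1$ via H\"older.

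First I would enlarge $k_1$ so that $f_{\sigma,k}(\cdot,0) \equiv 0$ on $M_0$ for every $k \geq k_1$, which is possible since $f_\sigma$ is bounded on $M_0$ in terms of $L$. Returning to the differential inequality for $\int_M f_{\sigma,k}^p \, d\mu_t$ derived in the proof of Proposition 4.1, I would stop \emph{before} the Young's inequality that introduced the $(C_3+k)^p$ bulk term, and instead absorb the $|h|^2$-weighted integrals via the Poincar\'e inequality of Section 3 applied with test function $u = f_{\sigma,k}^{p/2}$. Since $u$ is supported in $\supp(f_{\sigma,k}(\cdot,t))$, every term produced by Poincar\'e is localized there. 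Time-integration together with Gronwall (using the vanishing initial condition) then yields
\begin{align*}
\sup_{t \in [0,T)} \int_M f_{\sigma,k}^p \, d\mu_t + \int_0^T \int_M f_{\sigma,k}^{p-2} |\nabla f_\sigma|^2 \, d\mu_t \, dt \leq C \, A(k),
\end{align*}
whenever $p \geq p_1$, $\sigma \leq \ell_1 p^{-1/2}$, $k \geq k_1$, with $C$ depending on the stated parameters.

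Next, I would apply the Michael--Simon Sobolev inequality to $u = f_{\sigma,k}^{p/2}$ and interpolate in space with the $L^p$-norm, in the manner of the Ladyzhenskaya--Solonnikov--Ural'tseva parabolic embedding. This gives
\begin{align*}
\int_0^T \! \int_M f_{\sigma,k}^{p(n+2)/n} d\mu_t \, dt \leq C \Bigl( \sup_t \int_M f_{\sigma,k}^p \, d\mu_t \Bigr)^{\!2/n} \int_0^T \! \int_M \bigl( |\nabla f_{\sigma,k}^{p/2}|^2 + |H|^2 f_{\sigma,k}^p \bigr) d\mu_t \, dt.
\end{align*}
The $|H|^2 f_{\sigma,k}^p$ integral is absorbed using the Poincar\'e inequality once more, with the resulting $|A||\hat A| f_{\sigma,k}^p$ remainder controlled by the codimension estimate and the lower bound $|H| \geq (k/C_0)^{1/\sigma}$ available on $\supp(f_{\sigma,k})$. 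Inserting the first-step bound then yields
\begin{align*}
\int_0^T \int_M f_{\sigma,k}^{p(n+2)/n} \, d\mu_t \, dt \leq C \, A(k)^{1 + 2/n}.
\end{align*}

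Finally, since $f_{\sigma,k} \geq h - k$ on $\supp(f_{\sigma,h}) \subseteq \supp(f_{\sigma,k})$, applying H\"older's inequality on the support gives
\begin{align*}
(h-k)^p A(h) \leq \int_0^T \int_{\supp(f_{\sigma,k})} f_{\sigma,k}^p \, d\mu_t \, dt \leq \Bigl( \int_0^T \int_M f_{\sigma,k}^{p(n+2)/n} \, d\mu_t \, dt \Bigr)^{\!n/(n+2)} A(k)^{2/(n+2)}.
\end{align*}
Combining with the previous display yields $A(h) \leq C (h-k)^{-p} A(k)^\gamma$ with $\gamma = \tfrac{n+4}{n+2} > 1$, depending only on $n$.

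The main obstacle is the bookkeeping in the first step. The raw differential inequality contains $|h|^2$-weighted lower-order terms whose coefficients depend delicately on $\sigma$, $\sigma^{-4/\eta}$, $p$, $k$, $\Lambda$, $\eta$, and $L$, and the absorption into the favorable gradient terms requires reapplying the Poincar\'e inequality and the codimension estimate in the manner of Proposition 4.1, with $p_1$, $\ell_1$, $k_1$ adjusted so that the favorable gradient coefficients dominate at each stage. Once this reduction is complete, the Sobolev-interpolation and H\"older steps are routine.
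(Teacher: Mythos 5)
Your three-stage plan (time integration, parabolic Sobolev/Michael--Simon interpolation, then Chebyshev--H\"older) is the right framework, and it is exactly what Huisken's Theorem~5.1 does and what the paper is invoking. The second and third steps are standard and correct, including the final bookkeeping $A(h)(h-k)^p \le (\int\!\!\int f_{\sigma,k}^{pq_0})^{1/q_0} A(k)^{1-1/q_0}$. The gap is in your first step.

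You claim $\sup_t \int_M f_{\sigma,k}^p \, d\mu_t + \int_0^T\!\!\int_M f_{\sigma,k}^{p-2}|\nabla f_\sigma|^2 \leq C\, A(k)$ by ``stopping before the Young's inequality'' and instead absorbing the $|h|^2$-weighted integrals with the Poincar\'e inequality applied to $u = f_{\sigma,k}^{p/2}$. This does not close. The differential inequality one reaches in the proof of Proposition~4.1 contains the term $(C_3+k)\sigma p \int_M |h|^2 f_{\sigma,k}^{p-1}\, d\mu_t$, where the power of $f_{\sigma,k}$ is $p-1$, not $p$. The Poincar\'e inequality with $u = f_{\sigma,k}^{p/2}$ controls $\int |h|^2 f_{\sigma,k}^p$ but says nothing about $\int |h|^2 f_{\sigma,k}^{p-1}$, and since $f_{\sigma,k}$ vanishes on the boundary of its support there is no pointwise bound of $f_{\sigma,k}^{p-1}$ by $f_{\sigma,k}^p$. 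Some Young's inequality is therefore unavoidable; it produces a bulk term of the form $(C_3+k)^p\sigma \int_{\supp(f_{\sigma,k})}|h|^2\, d\mu_t$. This is \emph{not} bounded by $C\,A(k)$: on the support $|h|^2 \sim |H|^2 \ge (k/C_0)^{2/\sigma}$, which is large, not bounded. The correct treatment is a H\"older inequality $\int\!\!\int_{\supp}|h|^2 \le (\int\!\!\int_{\supp}|h|^{2r})^{1/r} A(k)^{1-1/r}$ for some $r>1$, together with a spacetime $L^r$ bound on $|h|^2$ restricted to the support. That $L^r$ bound comes from the $L^p$-estimate of Proposition~4.1 applied with a slightly \emph{enlarged} weight exponent (i.e.\ with $\sigma$ replaced by $\sigma + 2r/p$, which is why the statement must further restrict $\sigma$ to $\sigma \le \ell_1 p^{-1/2}$ with $\ell_1 < \ell_0$), using $|h|^{2r} f_\sigma^p \le C\, f_{\sigma+2r/p}^p$ and $f_\sigma \ge k$ on the support. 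This step also explains why $p_1 \ge p_0$, $\ell_1 \le \ell_0$, $k_1 \ge k_0$ appear. The upshot is that the first-step bound is of the form $\le C\, A(k)^{1-1/r}$ rather than $\le C\, A(k)$, and feeding that through your steps two and three yields $\gamma = 1 + \tfrac{2}{n+2} - \tfrac{1}{r}$, still $>1$ for $r > \tfrac{n+2}{2}$, but not exactly your claimed $\tfrac{n+4}{n+2}$. You should make the H\"older/$L^r$ step explicit and track how $\ell_1$ must be shrunk to accommodate it.
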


Appealing to Stampacchia's lemma (see for example Lemma B.1 in \cite{Kind-Stamp}) we obtain:

\begin{corollary}
There is a constant $k_2 = k_2 (n,\varepsilon_0, \eta, \varepsilon, \Lambda, L, \mu_0(M), T)$ such that 
\[f_{\sigma_0, k_2} \equiv 0\]
on $M\times [0,T)$, where $\sigma_0 := \ell_1 p_1^{-\frac{1}{2}}$ depends only on $n$, $\varepsilon_0$, $\eta$, $\varepsilon$ and $\Lambda$.  
\end{corollary}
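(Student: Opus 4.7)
The plan is to apply Stampacchia's iteration lemma directly to the inequality produced by the previous proposition. First I fix the parameters: I take $p = p_1$ and correspondingly $\sigma = \sigma_0 := \ell_1 p_1^{-\nicefrac{1}{2}}$, so that the hypotheses $p \geq p_1$ and $\sigma \leq \ell_1 p^{-\nicefrac{1}{2}}$ are satisfied. These choices are admissible since $p_1$ and $\ell_1$ depend only on $n$, $\varepsilon_0$, $\eta$, $\varepsilon$ and $\Lambda$, and we retain the threshold $k_1$ from the proposition.

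Next I verify the set-up required by Stampacchia's lemma. The function $k \mapsto A(k)$ is nonnegative and monotonically non-increasing, since the support of $f_{\sigma_0,k}$ shrinks as $k$ increases. The iteration inequality
\[
A(h) \leq \frac{C}{(h-k)^{p_1}} A(k)^\gamma
\]
holds for all $h > k \geq k_1$, with $\gamma > 1$ and $p_1 > 0$. These are exactly the hypotheses of Lemma B.1 in \cite{Kind-Stamp}, which supplies a constant
\[
d = C^{\nicefrac{1}{p_1}} A(k_1)^{\nicefrac{(\gamma-1)}{p_1}} 2^{\nicefrac{\gamma}{(\gamma-1)}}
\]
such that $A(k_1 + d) = 0$.

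I then set $k_2 := k_1 + d$. The quantity $A(k_1)$ is bounded a priori in terms of $n$, $\varepsilon_0$, $\eta$, $\varepsilon$, $\Lambda$, $L$, $\mu_0(M)$ and $T$, using for instance the volume bound $A(k_1) \leq \mu_0(M) \cdot T \cdot e^{CT}$ coming from the evolution of $d\mu_t$ under mean curvature flow and the fact that $|H|^2 \geq 0$; alternatively one uses the $L^{p_1}$-estimate from Proposition \ref{prop:Lp} at $k = k_1$ together with Markov's inequality. In either case $k_2$ depends only on the claimed quantities.

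Finally, $A(k_2) = 0$ means that for almost every $t \in [0,T)$ the set where $f_{\sigma_0}(\cdot,t) > k_2$ has measure zero, hence by the continuity of $f_{\sigma_0}$ on $M \times [0,T)$ we get $f_{\sigma_0, k_2} \equiv 0$ identically. The only step requiring care is the estimate on $A(k_1)$ to make the constant $k_2$ explicit in the stated parameters; the rest is a direct citation of a standard iteration lemma, so I do not anticipate any serious obstacle beyond bookkeeping of constants.
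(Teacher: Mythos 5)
Your proposal is correct and follows essentially the same route as the paper, which likewise obtains the corollary by a direct application of the Stampacchia iteration lemma (Lemma~B.1 of Kinderlehrer--Stampacchia) to the inequality of the preceding proposition, using the bound on $A(k_1)$ via the monotonicity of volume under mean curvature flow. The only minor blemish is the superfluous factor $e^{CT}$ in your bound $A(k_1)\le\mu_0(M)\,T\,e^{CT}$: since $\tfrac{d}{dt}\mu_t(M)=-\int_M|H|^2\,d\mu_t\le 0$, one in fact has $A(k_1)\le\mu_0(M)\,T$ outright, though this changes nothing.
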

Recall that $\eta$ depends only on $n$ and $\varepsilon_0$, and we chose $\Lambda$ depending only on $n$, $\varepsilon_0$ and $\varepsilon$. Therefore, by the corollary we have an estimate of the form
\begin{align*}
\frac{\lambda_1 + \varepsilon w}{|H|} \geq - C|H|^{-\sigma_0} -\Lambda \frac{|\hat A|^2}{|H|^2} 
\end{align*}
on $M\times [0,T)$, where $C = C(n,\varepsilon_0, \varepsilon, L, \mu_0(M), T)$. Appealing to the codimension estimate of Theorem \ref{thm:codim}, we finally obtain
\begin{equation}
\label{eq:conv_est}
\frac{\lambda_1 + \varepsilon w}{|H|} \geq - C|H|^{-\sigma_0} -C|H|^{-2\eta},
\end{equation}
where $C$ has the same dependencies as before. From here, since $\varepsilon$ can be made arbitrarily small, an application of Young's inequality to the two lower-order terms on the right-hand side gives the convexity estimate of Theorem \ref{thm:convex} (note that $T$ can be bounded in terms of $M_0$ by applying the maximum principle to the evolution equation of $W$).

\begin{remark}
There is another way to prove Theorem \ref{thm:convex} using compactness and the strong maximum principle, which we now sketch. Let $F:M \times [0,T) \to \mathbb{R}^{n+k}$ be a quadratically pinched mean curvature flow with $n \geq 5$ and $c < c_n$, fix a constant $\varepsilon >0$, and set 
\[f_\varepsilon := \frac{\lambda_1 + \varepsilon w}{|H|}.\]
For each $j \in \mathbb{N}$ set
\[\delta_{j} := \inf \{f_\varepsilon(x,t): |H|(x,t) \geq j\}.\]
Then the $\delta_j$ form a bounded nondecreasing sequence and therefore converge to some $\delta$. Choose a sequence $(x_j, t_j) \in P_j$ such that $f_\varepsilon(x_j,t_j) \to \delta$ and form a sequence of solutions by shifting $F(x_j,t_j)$ to the spacetime origin in $\mathbb{R}^{n+k} \times \mathbb{R}$ and parabolically rescaling by $|H|(x_j,t_j)$. Then the gradient estimates established in Section 3 of \cite{Nguyen2018a} ensure that this sequence converges smoothly in a small spacetime neighbourhood about the origin. The limit lies in $\mathbb{R}^{n+1}$ by Naff's codimension estimate, and the quantity $f_\varepsilon$ attains its minimum $\delta$ at the spacetime origin. The strong maximum principle applied to the evolution equation for $f_\varepsilon$ shows that $|\nabla h|\equiv 0$ on the limiting solution, which is consequently either a piece of shrinking sphere or cylinder. In either case we have $\lambda_1 \geq 0$, so $\delta >0$. In other words, on the original solution, there exists a threshold $C_\varepsilon$ depending on $\varepsilon$ and $M_0$ such that whenever $|H|(x,t) \geq C_\varepsilon$ there holds 
\[ \frac{\lambda_1(x,t)}{|H|(x,t)} \geq -\varepsilon \frac{w(x,t)}{|H|(x,t)} \geq - C(n) \varepsilon.\]

Note that this argument does not yield a quantitative blow-up rate for the negative part of the second fundamental form, in contrast to the estimate \eqref{eq:conv_est} proven above. Moreover, the gradient estimates in \cite{Nguyen2018a} are difficult to establish and will not be available in other situations where the Stampacchia iteration goes through. Indeed, an estimate showing asymptotic positivity of curvature is often needed to establish gradient estimates; this is the case for the fully nonlinear flow studied in \cite{Bren-Huisk17} and for three-dimensional Ricci flow \cite{Perelman_a}.
\end{remark}

\section{Singularity formation}

\label{sec:singularity_formation}

In the study of parabolic evolution equations it is natural to distinguish between singularities which form at different rates. For a solution of mean curvature flow $F:M\times[0,T) \to \mathbb{R}^{n+k}$ where $T$ is the maximal time we say that a type I singularity forms as $t \to T$ if there is a positive constant $C$ such that 
\[\max_{M_t} |A|^2 \leq \frac{C}{T-t}.\]
Note that this is the blow-up rate for solutions which shrink homothetically (such as shrinking spheres and cylinders). If on the other hand 
\[\limsup_{t \nearrow T} \Big[(T-t)\max_{M_t} |A|^2 \Big] =\infty\]
then the singularity forming at time $T$ is said to be of type II. 

For certain type I singularities, Baker used Huisken's monotonicity formula to show that appropriate rescalings about the singularity converge to a homothetically shrinking solution \cite{Baker_thesis}. Moreover, Baker could show that the only such solutions satisfying the quadratic pinching condition are shrinking spheres and (generalised) cylinders. The analogous result for mean-convex solutions of codimension one was proven earlier by Huisken \cite{Huisken90}. In \cite{Huisk-Sin99a} Huisken and Sinestrari used their convexity estimate to show that at a type II singularity, appropriate rescalings about the maximum of the curvature converge to a convex translating solution. In this section we use our convexity estimate to generalise their result to higher codimensions.

Fix a smooth mean curvature flow $F:M\times[0,T) \to \mathbb{R}^{n+k}$ of dimension $n \geq 5$ which is quadratically pinched with $c < c_n$, and suppose a type II singularity is forming as $t \to T$. Consider a sequence of times $\tilde t_j \to T$ and let $(x_j,t_j)$ be such that 
\[(\tilde t_j - t_j) |H|^2(x_j,t_j) := \max_{M \times [0,\tilde t_j]} (\tilde t_j - t) |H|^2(x,t).\]
Then we have 
\[|H|^2(x_j, t_j) = \max_{M} |H|^2(x,t_j) \]
By the type II assumption, for each $K > 0$ there is a point $(y,\tau) \in M\times [0,T)$ such that 
\[(T-\tau) |H|^2(y,\tau) \geq K.\]
If $j$ is large enough so that $\tilde t_j > \tau$ then we have 
\begin{align*}
(\tilde t_j - t_j) |H|^2(x_j,t_j) = (\tilde t_j - \tau)|H|^2(y,\tau) \geq K - (T - \tilde t_j) |H|^2(y,\tau).
\end{align*}
Hence if $j$ is sufficiently large there holds 
\[(\tilde t_j - t_j)|H|^2(x_j, t_j) \geq K/2,\]
and since $K$ can be made arbitrarily large this shows that 
\[(\tilde t_j - t_j)|H|^2(x_j, t_j) \to \infty.\]
It follows that $t_j \to T$. 

Let $L_j^2 := |H|^2(x_j,t_j)$ and consider the sequence of rescaled solutions defined by 
\[F_j(x,t) := L_j (F(x,L_j^{-2}t + t_j ) - F(x_j,t_j)), \qquad (x,t) \in M\times [- L_j^2t_j, L_j^2(T- t_j)),\]
which satisfy the conditions
\[F_j(0,0) = 0, \qquad |H_j|^2(0,0) = 1,\]
where $H_j$ is the mean curvature vector of $F_j$. More generally, for $t \leq L_j^2 (\tilde t_j - t_j)$ there holds
\begin{align*}
|H_j|^2(x,t) &= L_j^{-2} |H|^2(x, L_j^{-2} t + t_j) \\
&=L_j^{-2} \frac{(\tilde t_j - L_j^{-2} t -t_j ) |H|^2(x, L_j^{-2} t + t_j)}{\tilde t_j - L_j^{-2} t - t_j}  \\
& \leq L_j^{-2} \frac{(\tilde t_j - t_j) |H|^2(x_j, t_j) }{\tilde t_j - L_j^{-2} t - t_j}\\
& = \frac{\tilde t_j - t_j}{\tilde t_j - t_j -  L_j^{-2} t }.
\end{align*}
Therefore, for times $t \leq \delta L_j^2 (\tilde t_j - t_j)$ with $\delta <1$ we have
\[\max_{M} |H_j|^2(\cdot,t) \leq \frac{1}{1-\delta}.\]
Passing to a subsequence in $j$, we can guarantee that there is a sequence $\tau_j \to \infty$ such that 
\[\max_{M} |H_j|^2(\cdot, t) \leq 1 + \frac{1}{j}, \qquad \forall \; t \in [-\tau_j, \tau_j]. \]

It is well known that for a compact solution of mean curvature flow, a global upper bound for $|A|$ implies bounds on all of the higher derivatives of $A$. This follows from the estimates in \cite{Ecker-Huisken} in the codimension-one case, and similar arguments work in higher codimensions (the details can be found in Section 4.3 of \cite{Baker_thesis}). Standard compactness theorems therefore imply that there is a smooth solution 
\[\tilde F : \tilde M \times (-\infty, \infty ) \to \mathbb{R}^{n+k}\]
such that the sequence $F_j$ subconverges smoothly to $\tilde F$ in the following local sense. There is a sequence of nested open sets $U_l \subset \tilde M$ such that 
\[\tilde M = \bigcup_{l \in \mathbb N} U_l\]
and local diffeomorphisms $\varphi_l : U_l \to M$ such that the sequence 
\[(x,t) \mapsto F_j(\varphi_l(x), t), \qquad (x,t) \in U_l \times [-l, l] \]
converges smoothly to 
\[(x,t) \mapsto \tilde F(x,t), \qquad (x,t) \in U_l \times [-l, l]\]
as $j \to \infty$ for every $l \in \mathbb{N}$. This follows for example from Hamilton's compactness theorem \cite{Ham_compactness}, as is illustrated in Section 6.1 of \cite{Baker_thesis}.

\begin{theorem}
The smooth limiting solution $\tilde F : \tilde M \times (-\infty, \infty) \to \mathbb{R}^{n+k}$ obtained by the above rescaling procedure lies in an $(n+1)$-dimensional affine subspace and is either: a strictly convex translating solution; or the isometric product of $\mathbb{R}^m$ with a strictly convex translating solution of dimension $n -m$. 
\end{theorem}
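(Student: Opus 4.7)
The plan is to extract four successive structural properties of the limit $\tilde F$ from the estimates and compactness results already established, namely: (i) $\tilde F$ takes values in an $(n+1)$-dimensional affine subspace, (ii) $\tilde F$ is weakly convex when viewed as a codimension-one flow, (iii) $\tilde F$ is a translating soliton, and (iv) strict convexity holds after splitting off any flat factors.

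First I would apply Naff's codimension estimate, Theorem \ref{thm:codim}, to the rescaled flows. Under the parabolic rescaling by $L_j = |H|(x_j,t_j)$, the quantities transform as $|\hat A_j| = L_j^{-1}|\hat A|$ and $|H_j|=L_j^{-1}|H|$, so
\begin{equation*}
\frac{|\hat A_j|^2}{|H_j|^{2-2\eta}} = L_j^{-2\eta}\frac{|\hat A|^2}{|H|^{2-2\eta}} \leq C L_j^{-2\eta}.
\end{equation*}
Since $L_j \to \infty$ (a consequence of the type II assumption, exactly as used to show $\tau_j \to \infty$) and $|H_j|$ is locally bounded by construction, passing to the smooth limit yields $|\hat{\tilde A}| \equiv 0$. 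Thus the full second fundamental form of $\tilde F$ is everywhere parallel to $\tilde H$, i.e., the normal bundle splits orthogonally as $\tilde \nu_1 \oplus N^\flat$ with $N^\flat$ totally flat with respect to the induced connection. By a standard reduction-of-codimension argument (as in Section 4.2 of \cite{Baker_thesis}), the image $\tilde F(\tilde M\times\mathbb R)$ is contained in an $(n+1)$-dimensional affine subspace of $\mathbb R^{n+k}$, and from here on $\tilde F$ may be regarded as a codimension-one mean curvature flow.

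Next I would invoke the convexity estimate, Theorem \ref{thm:convex}. For the rescaled flow it reads $\tilde \lambda_1^{(j)} \geq -\varepsilon |\tilde H_j| - L_j^{-1} C_\varepsilon$, and passing to the limit gives $\tilde\lambda_1 \geq -\varepsilon|\tilde H|$ for every $\varepsilon > 0$; hence $\tilde\lambda_1 \geq 0$. Combined with step one this shows $\tilde F$ is a smooth, weakly convex, codimension-one eternal mean curvature flow with $|\tilde H|^2 \leq 1$ globally and $|\tilde H|(0,0) = 1$. The main conceptual step, which I expect to be the most delicate, is then promoting the interior space-time maximum of $|\tilde H|$ on a weakly convex eternal solution to the translator conclusion. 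This is Hamilton's rigidity theorem for eternal convex mean curvature flows: the differential Harnack quantity $\partial_t |H| + \langle \nabla |H|, Y\rangle + \tfrac{1}{2}A(Y,Y)$ is nonnegative for every tangent vector $Y$ on a weakly convex solution, and applying the strong maximum principle at the interior maximum of $|\tilde H|$ forces equality in Hamilton's inequality, which is precisely the soliton equation. One concludes that there is a constant vector $V\in \mathbb R^{n+1}$ with $\tilde H = V^\perp$, so $\tilde F(\cdot,t) = \tilde F(\cdot,0) + tV$ up to tangential reparametrisation.

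Finally, with $\tilde F$ a weakly convex translator, I would apply Hamilton's strong maximum principle for tensors to the codimension-one evolution equation $(\partial_t-\Delta)A = |A|^2 A$. Since $\tilde\lambda_1 \geq 0$ everywhere, either $\tilde\lambda_1 > 0$ throughout $\tilde M\times\mathbb R$, giving a strictly convex translator and the first alternative, or the zero eigenspace of $A$ is nonempty at some point. In the latter case the strong maximum principle implies that the null distribution $\ker A$ has constant rank $m \geq 1$, is parallel, and is invariant in time; because the ambient space is Euclidean, this parallel flat distribution integrates to an isometric splitting $\tilde F = \mathbb R^m \times \tilde F'$, where $\tilde F'$ is a smooth weakly convex translating solution of dimension $n-m$. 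Strict convexity of $\tilde F'$ is automatic from the rank-$m$ hypothesis, producing the second alternative and completing the proof.
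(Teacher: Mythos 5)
Your proposal uses the same four ingredients as the paper (Naff's codimension estimate, the convexity estimate, Hamilton's strong maximum principle for tensors, and Hamilton's Harnack rigidity), but there are two genuine gaps in the execution.

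\textbf{Missing positivity of $|\tilde H|$.} Before the codimension reduction and the convexity estimate can be applied pointwise on the limit, one must rule out the possibility that $|\tilde H|$ vanishes somewhere on $\tilde M \times (-\infty,\infty)$, since the decomposition $A = h\nu_1 + \hat A$ and the quantities $\lambda_1$, $w$, $v$ are only defined where $|H|>0$. The paper handles this by noting that $\tilde W$ still satisfies $(\partial_t - \Delta)\tilde W \geq 2|\tilde h|^2 \tilde W$, so the strong maximum principle forces either $\tilde W \equiv 0$ (contradicting $|\tilde H|^2(0,0)=1$, since the pinching gives $\tilde W \geq \tfrac{\varepsilon_0}{2}|\tilde H|^2$) or $\tilde W > 0$ everywhere, whence $|\tilde H|>0$ everywhere. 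Your argument skips this step entirely.

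\textbf{Order of splitting versus Harnack rigidity.} You apply Hamilton's Harnack rigidity directly to the weakly convex eternal solution $\tilde F$ and only afterward split off the flat factor. However, the rigidity case of Hamilton's Harnack estimate \cite{Ham_harnack}, as stated and as used in \cite{Huisk-Sin99a}, requires \emph{strict} convexity: the equality analysis uses the invertibility of the second fundamental form to conclude the soliton equation, and this step fails when $h$ has a nontrivial null space. Your sentence asserting that the strong maximum principle ``forces equality in Hamilton's inequality'' on a merely weakly convex solution is the gap. The correct order, which the paper follows, is to first apply Hamilton's strong maximum principle for tensors to the evolution $(\partial_t - \Delta)\tilde h^i_j = |\tilde h|^2 \tilde h^i_j$ to show the null distribution of $\tilde h$ is parallel, time-invariant and of constant rank $m$, giving the isometric splitting $\tilde M_t = \mathbb{R}^m \times N_t$ with $N_t$ strictly convex; and only then invoke Harnack rigidity on the strictly convex factor $N_t$ to deduce it moves by translation.
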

\begin{proof}
We denote the second fundamental form of $\tilde F$ by $\tilde A$, and use this convention for other curvature quantities as well. We know that the mean curvature vector of $\tilde F$ satisfies $|\tilde H|(0,0) = 1$, but a priori, there could be a point on the limiting solution where $|\tilde H| = 0$. However the  evolution equation 
\[(\partial_t - \Delta ) \tilde W \geq 2 |\tilde h|^2 \tilde W \geq \frac{3}{2} \tilde W^3\]
is still valid, so by the strong maximum principle we either have $\tilde W >0$ or $\tilde W \equiv 0$. In the latter situation $|\tilde H|^2(0,0) = 0$, which is a contradiction, so appealing to the pinching we conclude that
\[\frac{4}{3n} |\tilde H|^2  > \tilde W >0 \]
on $\tilde M \times (-\infty, \infty)$. Hence Naff's codimension estimate implies $\tilde h \equiv \tilde A$, and consequently, the image of $\tilde F$ lies in an $(n+1)$-dimensional subspace of $\mathbb{R}^{n+1}$. By the convexity estimate, $\tilde h \geq 0$ on $\tilde M \times (-\infty,\infty)$. 

To recap, the blow-up limit $\tilde F$ is codimension one, has nonnegative second fundamental form, its scalar mean curvature $|\tilde H|$ is globally bounded from above by one, and this global upper bound is attained at the spacetime origin. This is exactly the situation considered in Section 4 of \cite{Huisk-Sin99a}. Applying Hamilton's strong maximum principle for tensors to the evolution of the second fundamental form,
\[(\partial_t - \Delta) \tilde h_j^i = |\tilde h|^2 \tilde h^i_j,\]
we conclude that the solution $\tilde M_t := \tilde  F(\tilde M, t)$ splits as an isometric product $\mathbb{R}^{m} \times N_t$, where $N_t$ is a strictly convex solution of dimension $n -m$ which exists for all $ t \in (-\infty, \infty)$. Since the spacetime maximum of the mean curvature of $N_t$ is attained at the spacetime origin, the rigidity case of Hamilton's Harnack inequality \cite{Ham_harnack} implies the family $N_t$ moves by translation. 
\end{proof}

\begin{remark}
By the gradient estimate in \cite{Nguyen2018a}, the limiting flow $\tilde F$ cannot be the product of $\mathbb{R}^{n-1}$ with a grim reaper. On the other hand, the grim reaper is the only strictly convex translator in $\mathbb{R}^2$, so we conclude that $m \leq n-2$. 
\end{remark}

\begin{remark}
If $N_t$ is uniformly two-convex in the sense that the smallest two principal curvatures satisfy 
\[\lambda_1 + \lambda_2 \geq \alpha H\]
globally for some $\alpha >0$, then by the gradient estimate in \cite{Nguyen2018a} and work of Bourni-Langford \cite{Bourni-Langford}, $N_t$ must be rotationally symmetric and hence a bowl soliton of dimension $m$. See also the paper \cite{Naff2019a}.
\end{remark}

\end{document}